\newtheorem{theorem}{Theorem}[section]
\newtheorem{lemma}[theorem]{Lemma}
\newtheorem{proposition}[theorem]{Proposition}
\newcommand{\SF}[1]{\textcolor{black}{{#1}}}
\newcommand\redout{\bgroup\markoverwith{\textcolor{red}{\rule[.5ex]{2pt}{0.8pt}}}\ULon}
\title{Variational multi-scale spectral solution of convection-dominated parabolic problems}
\author{T. Chac\'on Rebollo\footnotemark[1]
\and S. Fern\'andez-Garc\'ia\footnotemark[1]}
\begin{document}

\footnotetext[1]
{Dpto. EDAN \& IMUS, University of Seville, Campus de Reina Mercedes, 41012 Sevilla (Spain), e-mail: chacon@us.es, soledad@us.es}
\maketitle
\begin{abstract}
In this work, we consider an extension to parabolic problems of the variational multi-scale method with spectral approximation of the sub-scales. 
We first discretize in time using a finite difference scheme and second, apply the generalization of the spectral variational multi-scale method.
To obtain error estimations in convection-dominated flows, we find a helpful link between the stabilized term expressed in terms of Green's functions and in terms of spectral functions.
Finally, we present some numerical tests to show the reliability of the method. We consider the stationary one-dimensional advection-diffusion-reaction equation and the evolutive one-dimensional advection-diffusion equation.  
\end{abstract}

\section{Introduction}
The Variational Multi-Scale method provides a general framework to remedy the stability difficulties associated to the Galerkin discretization of PDEs (partial differential equations) with terms of different derivation orders (see Hughes (cf. \cite{Hughes_1995,HughesStewart_1995,HughesFMQ_1998}). At the discrete level, spurious oscillations may appear when certain low-order operator terms are dominant, providing unreliable numerical solutions for technological and engineering applications.  The basic stabilized method in the framework of finite element discretizations is the SUPG (Streamline Upwind Petrov-Galerkin) method, (see \cite{Brooks_1982}). It consists in adding to the classical Galerkin formulation an extra term devoted to control the advection derivative. This pioneering work was followed by a large class of stabilized methods (Galerkin-Least Squares methods, adjoint (or unusual) Galerkin-Least Squares methods, among others) all consisting in adding extra terms to the Galerkin formulation aiming to control one or several operator terms appearing in the equations. These methods where mainly applied to the numerical solution of incompressible and subsequently compressible flow equations, also proving that they provide a further stabilization of the discretization of the pressure gradient. An overview of those methods may be found in \cite{Hughes_1995}.
\par
 The Variational Multi-Scale (VMS) formulation states separate variational problems for large and small scales. The small scales are driven by the residual of the large scales. A global stabilization effect is achieved, due to a dissipative effect of the small scales onto the large scales. To build a feasible VMS method, the small scale problem is further discretized by some kind of approximation. A possibility is a local diagonalization of the PDE operator on each grid element. This leads to the Adjoint stabilized method, mentioned above, and also to the Orthogonal Sub-Scales (OSS) method, introduced by Codina in \cite{codina12}.  In these methods, the effects of the sub-grid scales onto the resolved ones is made apparent through a dissipative interaction of operator terms. The VMS methods have been successfully applied to many flow problems, and in particular to the building of  models of Large Eddy Simulation (LES) of turbulent flows, with remarkable accuracy (cf. \cite{hug112,john0612,Chaconlibro}). 
 \par The VMS method has been successfully applied to evolution PDEs. Early studies date back to the 1990s, when results from \cite{Hughes_1995} were extended to nonsymmetric, linear, evolution operators, see \cite{HughesStewart_1995}. After that, we find different works in the literature dealing with this class of problems, such us those commented subsequently.  In \cite{harari,hauke}, 
the authors consider parabolic problems where spurious oscillations occur in the Galerkin formulation due to extra small time steps. In order to remove this pathology, they used the Rothe Method, also called the Horizontal Method of Lines, 
which consists in first performing a semi-discretization in time, and then applying a stabilized method to the spatial problems issued from the time discretization. 
In the series of articles \cite{fourier1,fourier2,fourier3} they consider the transient Galerkin and SUPG methods, the transient subgrid scale (SGS) stabilized methods and the transient subgrid scale/gradient subgrid scale (SGS/GSGS), respectively, 
and perform Fourier analysis for the one-dimensional advection-diffusion-reaction equation.
On the other hand, a stabilized finite element method to solve the transient Navier-Stokes equations based on the decomposition of the unknowns into resolvable and subgrid scales is considered in \cite{codina2002,codina2007}.
Finally, in \cite{asensio} the authors consider the evolutive advection-diffusion-reaction problem in one space dimension and compare the Rothe method with the so-called Method of Lines, which consists on first, discretize in space by means of a stabilized finite element method and then use a finite difference scheme to approximate the solution. 
 
 
 \par
The use of spectral techniques to model the sub-grid scales is introduced in \cite{ChaconDia}. The sub-grid scales are initially approximated by bubble functions on each grid element. The basic observation is that the eigenpairs of the advection-diffusion operator may be calculated explicitly, what allows to analytically calculate the sub-grid scales by means of a spectral expansion on each grid element. A feasible VMS-spectral discretization is then built by truncation of this spectral expansion to a finite number of modes. For piecewise affine finite elements, the stabilization coefficients are weighted sums of the cha\-rac\-te\-ris\-tic times of the eigenmodes. The method with an odd number of modes satisfies the discrete maximum principle. It is found to be of 3rd. order with respect to the number of eigenmodes.
\par
In the present paper we apply the method of \cite{ChaconDia} to the solution of evolution advection-diffusion equations. We follow the Rothe Method (Horizontal Method of Lines \cite{asensio,harari,hauke}) applying the spectral VMS discretization to the advection-diffusion-reaction problems issued from the time discretization. For these problems we cast the method as a standard VMS method with stabilized coefficients replaced by some approximated stabilized coefficients. These are computed from either the spectral eigenfunctions or from approximated element Green's functions, that in their turn are exactly computed from these eigenfunctions.
We obtain error estimates for both diffusion-dominated and convection-dominated regimes. For the latter we prove that the method is accurate for a range of local P\'eclet numbers that increases as the number of eigenfunctions appearing in the method increases.
\par
We present several numerical tests for 1D evolution advection-diffusion equations. We observe that the method still satisfies the maximum principle for evolution advection-diffusion equations when the number of eigenfunctions is odd. Also, that the numerical solution presents a super-convergence effect at grid nodes: it is second order accurate in discrete $L^2(0,T;H^1(\Omega))$ norm at these nodes, while the first time iterate is exact at grid nodes. We further show that the method presents a fourth order convergence with respect to the number of eigenfunctions.
\par
The article is outlined as follows. Section \ref{formulation} is devoted to the formulation of the problem in terms of the spectral approximation of the sub-scales. In Section \ref{evolutive} we build the method for the evolutive advection-diffusion problem. After that, in Section \ref{secestimates}, we include the error estimates analysis, where we distinguish between the diffusion-dominated regime and the convection-dominated regime. In Section \ref{numerics} we perform some numerical tests and finally, in Section \ref{conclusions} we present some conclusions and open problems to be addressed. We also include Appendix \ref{secapp} to expose two technical results.
\section{Abstract formulation and spectral approximation of the sub-scales}\label{formulation}
We consider a Hilbert space $H$. We identify $H$ with its topological dual $H'$. We consider another Hilbert space V. We assume $V \subset H$  with dense and bounded embedding so that $H' \subset V'$. Denote by ${\cal L}_2(V)$ the space of bilinear bounded forms on $V$. Let $a \in L^1(0,T; {\cal L}_2(V))$ uniformly bounded and $V$-elliptic with respect to $t$. Let $f \in L^2(0,T;X')$ and $U_0 \in H$. Consider the variational parabolic problem,
\begin{equation}\label{AWEAD}
\left\{\begin{array}{l}
\mbox{Find }U\in L^2((0,T);X)\cap C^0([0,T];H) \quad \mbox{such that,}\\  \noalign{\smallskip}
\displaystyle \frac{d}{dt}(U,V)+a(t;U,V)=\langle f,V\rangle \,\, \forall V\in X,\,\, \mbox{in  } {\cal D}'(0,T);\\  \noalign{\smallskip}
U(0)=U_0.
\end{array}\right.
\end{equation}
It is standard that this problem admits a unique solution.
To discretize this problem, we proceed through the so-called Horizontal Method of Lines \cite{asensio,harari}, which consists on first, discretize in time using a finite difference scheme and second apply the spectral Variational Multi-Scale method. 

Consider a uniform partition $\{0=t_0<t_1<...<t_N=T\}$ of the interval $[0,T],$ with time-step size $k=T/N.$ Then, the time discretization of problem (\ref{AWEAD}) by the Backward Euler scheme gives
\begin{equation}\label{ecu_abstractdis}
\begin{array}{l}
(U^{n+1},V)+k a^{n+1}(U^{n+1},V)=k\langle f^{n+1},V\rangle+(U^{n},V), \quad \forall V\in X,\\ \noalign{\smallskip}
n=0,1,...,N-1,U^0=U(0),
\end{array}
\end{equation}
which can be seen as a family of stationary problems. The data $a^{n+1}$ and $f^{n+1}$ are some approximate values to $a(t;\cdot,\cdot)$ and $f(t)$ at $t=t_{n+1}$. Thus, we can define,
\begin{equation*}
\left\{\begin{array}{l}
B_n(U,V)=(U,V)+k a^{n+1}(U,V),\quad \forall U,V\in X,\\ \noalign{\smallskip}
l^{n+1}(V)=k \langle f^{n+1},V\rangle+(U^{n},V), \quad \forall V\in X,
\end{array}\right.
\end{equation*}
and rewrite problem (\ref{ecu_abstractdis}) as
\begin{equation*}\label{ecu_abstractDia}
\begin{array}{l}
B_n(U^{n+1},V)=l^{n+1}(V), \quad \forall V\in X,\\ \noalign{\smallskip}
n=0,1,...,N-1.
\end{array}
\end{equation*}
Now, it is possible to build the Variational Multi-Scale formulation of this problem. Indeed, we consider the decomposition,
\begin{equation}\label{dec}
X=X_h\oplus \tilde{X},
\end{equation}
where $X_h$ is a sub-space of $X$ of finite dimension, and $\tilde{X}$ is a complementary, infinite-dimensional, sub-space of $X.$ Notice that this is a multi-scale decomposition of the space $X,$ being $X_h$ the large scale space and $\tilde{X}$ the small scale space. Hence, one can decompose the solution of problem (\ref{ecu_abstractDia}) as 
\begin{equation*}\label{Udec}
\begin{array}{l}
U^{n+1}=U_h^{n+1}+\tilde{U}^{n+1}, \quad \mbox{for } U_h^{n+1}\in X_h, \tilde{U}^{n+1}\in \tilde{X}, \\ \noalign{\smallskip}
n=0,1,...,N-1,
\end{array}
\end{equation*}
and in the same form the test function $V=V_h+\tilde{V}.$ Therefore, problem (\ref{ecu_abstractDia}) can be reformulated as
\begin{equation}\label{ecu_dec2}
\left\{\begin{array}{lr}
B_n(U_h^{n+1}+\tilde{U}^{n+1},V_h)=l^{n+1}(V_h),\quad\forall V_h\in X_h,&(a)\\ \noalign{\smallskip}
B_n(U_h^{n+1}+\tilde{U}^{n+1},\tilde{V})=l^{n+1}(\tilde{V}),\quad\forall  \tilde{V}\in \tilde{X},&(b)\\ \noalign{\smallskip}
n=0,1,...,N-1.
\end{array}\right.
\end{equation}
From equation (\ref{ecu_dec2})(b), we can define the residual of the large scales component in each temporal step as
\begin{equation}\label{residuo}
\begin{array}{l}
\langle R_n(U_h^{n+1}),\tilde{V}\rangle=l^{n+1}(\tilde{V})-B_n(U_h^{n+1},\tilde{V}),\quad\forall  \tilde{V}\in \tilde{X}, \\
n=0,1,...,N-1.
\end{array}
\end{equation}
Consider the static condensation operator $\Pi_n:\tilde{X}'\mapsto\tilde{X}$ defined by $\Pi_n(g)=\tilde{G}\in\tilde{X},$ solution of
$$
B_n(\tilde{G},\tilde{V})=\langle g, \tilde{V} \rangle,\,\, \forall \tilde{V} \in \tilde{X}.
$$
Then $\tilde{U}^{n+1}=\Pi_n(R_n(U_h^{n+1}))$ with $U_h^{n+1}$ the solution of 
\begin{equation}\label{ecuVMS}
\begin{array}{l}
B_n(U_h^{n+1},V_h)+B_n(\Pi_n(R_n(U_h^{n+1})),V_h)=l^{n+1}(V_h), \quad \forall V_h\in X_h,\\ \noalign{\smallskip}
n=0,1,...,N-1,
\end{array}
\end{equation}
which is the standard Variational Multi-Scale (VMS) reformulation of problem (\ref{ecu_abstractDia}).

It is possible to do an spectral approximation of the small scales with our modified operator.
Assume that (\ref{ecu_abstractdis}) is the variational formulation of the PDEs
\begin{equation}\label{ecu_abstractst}
\begin{array}{l}
\tilde{\mathcal{L}_n}(U^{n+1})=l^{n+1},\\ \noalign{\smallskip}
n=0,1,...,N-1,U^0=U(0),
\end{array}
\end{equation}
on a bounded domain $\Omega\subset\mathbb{R}^d,$ with $\tilde{\mathcal{L}_n}=I+k \mathcal{L}_n$ , being $I$ the identity operator, $\mathcal{L}_n$ the elliptic operator defined by
\begin{equation*}\label{operator}
\begin{array}{l}
\langle{\mathcal{L}_n}V,W\rangle=a^{n+1}(V,W),\quad \forall W\in X,\\ \noalign{\smallskip}
\end{array}
\end{equation*}
  and $X$ is a suitable Hilbert space of functions defined on $\Omega$.
Then, $\tilde{\mathcal{L}_n}:X\mapsto\tilde{X}$ is the operator defined by
\begin{equation*}\label{ecu_abstractst}
\begin{array}{l}
\langle\tilde{\mathcal{L}_n}V,W\rangle=B_n(V,W)=(V,W)+ka^{n+1}(V,W),\quad \forall W\in X.\\ \noalign{\smallskip}
\end{array}
\end{equation*}
Given a triangulation $\mathcal{T}_h$ of the domain $\Omega,$ we can approximate the small scale space $\tilde{X}$ by 
$$\tilde{X}=\bigoplus_{K\in\mathcal{T}_h}\tilde{X}_K,\quad\mbox{with } \tilde{X}_K=\{\tilde{V}\in\tilde{X}:supp(\tilde{V})\subset K\}.$$
Hence, it is possible to approximate
\begin{equation*}
\tilde{U}\simeq \sum_{K\in\mathcal{T}_h}\tilde{U}_K,\quad\mbox{with }\tilde{U}_K\in\tilde{X}_K,
\end{equation*}
and the problem (\ref{residuo}) is approximated by the family of problems
\begin{equation*}\label{residuo_K}
\begin{array}{l}
\langle R_n(U_h^{n+1}),\tilde{V}_K\rangle=B_n(\tilde{U}_K^{n+1},\tilde{V}_K),\quad\forall\tilde{V}_K\in \tilde{X}_K, K\in\mathcal{T}_h, \\ \noalign{\smallskip}
n=0,1,...,N-1.
\end{array}
\end{equation*}
Then, $\tilde{U}_K^{n+1}=\Pi_{n,K}(R_n(U_h^{n+1})),$ where $\Pi_{n,K}$ denotes the restriction of operator $\Pi_n$ to  $\tilde{X}_K.$

Given a weight function $p$ on $K$ (a measurable real function which is positive a.e. on $K$) let us define the weighted  $L^2$ space
\begin{equation*}
L^2_p(K)=\{W:K\rightarrow \mathbb{R} \mbox{ measurable such that } p|W|^2\in L^1(K)\},
\end{equation*}
which is a Hilbert space endowed with the inner product 
\begin{equation*}
(W_1,W_2)_p=\int_K p(x) W_1(x) W_2(x) dx.
\end{equation*}
Thus, the next result, whose proof is analogous to the proof of Theorem 1 in \cite{ChaconDia}, is satisfied.
\begin{theorem}\label{maintheorem}
Let us assume that there exists a complete sub-set $\{\hat{z}_j^{(n,K)}\}_{j\in\mathbb{N}}$ on $\tilde{X}_K$ formed by eigenfunctions of the operator $\mathcal{L}_{n,K}$, which is an orthonormal system in $L^2_{p_{n,K}}(K)$ for some weight function $p_{n,K}\in C^1(\bar{K}).$ Then,
\begin{equation}\label{series}
\begin{array}{l}
\tilde{U}_K^{n+1}=\displaystyle\sum_{j=0}^{\infty}\beta_j^{K,n} \langle R_n(U_h^{n+1}),p_{n,K} \hat{z}_j^{(n,K)}\rangle \hat{z}_j^{(n,K)},\quad \mbox{with }\beta_j^{K,n} =(\Lambda_j^{K,n})^{-1},\\
n=0,1,...,N-1,
\end{array}
\end{equation}
where $\Lambda_j^{K,n} =1+k\lambda_j^{(n,K)},$ being $\lambda_j^{(n,K)}$ the eigenvalue of $\mathcal{L}_{n,K}$ associated to $\hat{z}_j^{(n,K)}.$
\end{theorem}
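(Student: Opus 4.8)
The plan is to solve the small-scale equation $B_n(\tilde U_K^{n+1},\tilde V_K)=\langle R_n(U_h^{n+1}),\tilde V_K\rangle$ by expanding $\tilde U_K^{n+1}$ in the eigenbasis $\{\hat z_j^{(n,K)}\}_{j\in\mathbb N}$ and matching coefficients. First I would write $\tilde U_K^{n+1}=\sum_{j=0}^\infty c_j\,\hat z_j^{(n,K)}$ for unknown scalars $c_j$; this is legitimate because the eigenfunctions form a complete set in $\tilde X_K$. The key structural fact to exploit is that $\tilde{\mathcal L}_n = I + k\mathcal L_n$, so each $\hat z_j^{(n,K)}$ is also an eigenfunction of $\tilde{\mathcal L}_{n,K}$ with eigenvalue $\Lambda_j^{K,n}=1+k\lambda_j^{(n,K)}$. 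Consequently, testing with $\tilde V_K=p_{n,K}\hat z_i^{(n,K)}$ and using the definition $\langle\tilde{\mathcal L}_{n,K}V,W\rangle=B_n(V,W)$, one gets $B_n(\hat z_j^{(n,K)},p_{n,K}\hat z_i^{(n,K)})=\langle \tilde{\mathcal L}_{n,K}\hat z_j^{(n,K)},p_{n,K}\hat z_i^{(n,K)}\rangle=\Lambda_j^{K,n}(\hat z_j^{(n,K)},\hat z_i^{(n,K)})_{p_{n,K}}=\Lambda_j^{K,n}\delta_{ij}$, where the last equality is exactly the $L^2_{p_{n,K}}(K)$-orthonormality hypothesis.

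With this computation in hand, the weak equation tested against $p_{n,K}\hat z_i^{(n,K)}$ collapses to $\Lambda_i^{K,n}c_i=\langle R_n(U_h^{n+1}),p_{n,K}\hat z_i^{(n,K)}\rangle$, so $c_i=(\Lambda_i^{K,n})^{-1}\langle R_n(U_h^{n+1}),p_{n,K}\hat z_i^{(n,K)}\rangle=\beta_i^{K,n}\langle R_n(U_h^{n+1}),p_{n,K}\hat z_i^{(n,K)}\rangle$, which is precisely \eqref{series}. I would close by noting that $\Lambda_j^{K,n}=1+k\lambda_j^{(n,K)}\geq 1$ because $\mathcal L_{n,K}$ is elliptic hence has nonnegative eigenvalues and $k>0$; this guarantees the $\beta_j^{K,n}$ are well defined and bounded by $1$, and also ensures the series for $\tilde U_K^{n+1}$ converges in $\tilde X_K$ whenever $R_n(U_h^{n+1})\in\tilde X_K'$, so the formal manipulation is justified a posteriori.

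The main obstacle is not the algebra but the functional-analytic bookkeeping: one must check that $p_{n,K}\hat z_i^{(n,K)}\in\tilde X_K$ (so that pairing with the residual makes sense), that term-by-term testing of the infinite series against each $p_{n,K}\hat z_i^{(n,K)}$ is valid, and that the resulting series genuinely solves the weak problem for all $\tilde V_K\in\tilde X_K$, not merely for the test functions $p_{n,K}\hat z_i^{(n,K)}$. The regularity assumption $p_{n,K}\in C^1(\bar K)$ is what makes the multiplication $\tilde V_K\mapsto p_{n,K}\tilde V_K$ preserve the space $\tilde X_K$ and keeps the eigenvalue problem for $\mathcal L_{n,K}$ well posed in the weighted space; invoking completeness and orthonormality then lets one pass from coefficients to the full identity by density. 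Since the proof is declared analogous to that of Theorem 1 in \cite{ChaconDia}, I would carry out these verifications exactly as there, pointing out only the modification $\mathcal L_{n,K}\leadsto \tilde{\mathcal L}_{n,K}=I+k\mathcal L_{n,K}$, which merely shifts every eigenvalue by $1$ after scaling by $k$ and leaves the eigenfunctions untouched.
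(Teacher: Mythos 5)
Your proposal is correct and follows essentially the same route as the paper, which itself only sketches the argument by reference to Theorem 1 of the cited work: expand the sub-scale in the eigenbasis, use orthonormality in $L^2_{p_{n,K}}(K)$ together with the fact that $\tilde{\mathcal{L}}_{n,K}=I+k\mathcal{L}_{n,K}$ only shifts each eigenvalue to $\Lambda_j^{K,n}=1+k\lambda_j^{(n,K)}$, and match coefficients to get $\beta_j^{K,n}=(\Lambda_j^{K,n})^{-1}$. A small simplification you could make is to note that $p_{n,K}\hat z_i^{(n,K)}$ is an eigenfunction of the adjoint operator with the same eigenvalue, so testing the (unique, Lax--Milgram) solution directly against these functions yields the coefficients without having to justify term-by-term interchange of the series with $B_n$.
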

Note that now the coefficients in series (\ref{series}) depends on the time step, in contrast to \cite{ChaconDia}, where the considered problem was stationary.

Finally, in order to obtain a feasible discretization, we truncate series (\ref{series}) to $M\geq 1$ addends and approximate problem (\ref{ecuVMS}) by
\begin{equation}\label{ecuVMS_M}
\begin{array}{l}
B_n(U_{h,M}^{n+1},V_h)+B_n(\Pi^M_n(R_n(U_{h,M}^{n+1})),V_h)=l^{n+1}(V_h), \quad \forall V_h\in X_h,\\ \noalign{\smallskip}
n=0,1,...,N-1,
\end{array}
\end{equation}
where the unknowns are $U_{h,M}^{n+1}\in X_h$ and the operator $\Pi^M_n$ is given by
\begin{equation}\label{Pi_M}
\Pi^M_n(\varphi)=\sum_{K\in\mathcal{T}_h}\Pi^M_{n,K}(\varphi), \,\mbox{with }\Pi^M_{n,K}(\varphi)=\sum_{j=0}^{M}\beta_j^{K,n} \langle \varphi,p_{n,K} \hat{z}_j^{(n,K)}\rangle \hat{z}_j^{(n,K)}.
\end{equation}

\section{Application to evolutive advection-diffusion problem}\label{evolutive}
In this section, we apply the spectral method (\ref{ecuVMS_M}) to the following initial value problem with homogeneous boundary conditions.
Let us consider the evolutive advection-diffusion problem
\begin{equation*}\label{EAD}
\left\{\begin{array}{l}
\displaystyle\frac{\partial}{\partial t}U+\mathbf{c}\cdot\nabla U-\mu \Delta U=f \quad \mbox{in }\Omega\times(0,T),\\  \noalign{\smallskip}
U=0 \quad \mbox{on }\partial \Omega\times(0,T),\\  \noalign{\smallskip}
U(0)=U_0\quad \mbox{on } \Omega,\\
\end{array}\right.
\end{equation*}
where $\Omega\subset\mathbb{R}^d$ $d\geq 1$ is a bounded domain, $\mathbf{c} \in L^\infty((0,T)\times \Omega)^d$ is a divergence-free given velocity field, $\mu>0$ is the diffusion coefficient,  $f\in L^2((0,T);L^2(\Omega))$ is the source term and $U_0\in L^2(\Omega)$ is the initial data.
The weak formulation of problem (\ref{EAD}) is given by
\begin{equation}\label{WEAD}
\left\{\begin{array}{l}
\mbox{Find }U\in L^2((0,T);H^1_0(\Omega))\cap C^0([0,T];L^2(\Omega)) \quad \mbox{such that,}\\  \noalign{\smallskip}
(\partial_t U,V)+(\mathbf{c}\cdot\nabla U,V)+\mu (\nabla U,\nabla V)=\langle f,V\rangle \quad \forall V\in H_0^1(\Omega),\\  \noalign{\smallskip}
U(0)=U_0,
\end{array}\right.
\end{equation}
which is problem (\ref{AWEAD}) with 
\begin{equation}\label{def_a}
a(t,U,V)=(\mathbf{c}(t)\cdot\nabla U,V)+\mu (\nabla U,\nabla V).
\end{equation}
Therefore, it is possible to consider the spectral VMS discretization (\ref{ecuVMS_M}).

Note that equations in expression (\ref{ecu_abstractdis}) for the bilinear form considered in this case (\ref{def_a}), can be seen as a family of stationary advection-diffusion-reaction problems. In particular, if instead of the coefficient 
$1/k$ coming from the semi-discretization in time, we consider a general reaction coefficient $\gamma$, we have a general stationary advection-diffusion-reaction 
problem,
\begin{equation}\label{EADR}
\left\{\begin{array}{l}
\gamma U+\mathbf{c}\cdot\nabla U-\mu \Delta U=g \quad \mbox{in }\Omega,\\  \noalign{\smallskip}
U=0 \quad \mbox{on }\partial \Omega,\\ 
\end{array}\right.
\end{equation}
where $\mathbf{c}\in L^\infty(\Omega)$ is a divergence-free given velocity field, $\gamma,\mu>0$ are the reaction and diffusion coefficients, respectively,   and $g\in L^2(\Omega)$ is the source term.
The weak formulation of problem (\ref{EADR}) is given by
\begin{equation}\label{WEADR}
\left\{\begin{array}{l}
\mbox{Find }U\in H_0^1(\Omega) \quad\mbox{such that,}\\  \noalign{\smallskip}
\gamma\,(U,V)+(\mathbf{c}\cdot\nabla U,V)+\mu \,(\nabla U,\nabla V)=(g,V) \quad \forall V\in H_0^1(\Omega).\\  \noalign{\smallskip}
\end{array}\right.
\end{equation}
Given a triangulation $\mathcal{T}_h$ of the domain $\Omega,$ we assume that the velocity $\mathbf{c}$  is approximated at time $t=t_n$ in the sub-grid term by a constant value $\mathbf{c}_{n,K}$ on each element K.
Let us now state a result about the eigenpairs of the advection-diffusion-reaction operator.
\begin{proposition}\label{propADR}
The couple $\left(\tilde{\omega}_j^{(n,K)},\eta_j^{(n,K)}\right)$ is an eigenpair of the advection-diffusion-reaction operator $\mathcal{L}_{n,K}$ if and only if the couple $\left(\tilde{W}_j^{(K)},\sigma_j^{(K)}\right)$ is an eigenpair of the Laplace operator $-\Delta$ in $H_0^1(K),$ where
\begin{equation}\label{sigma}
\tilde{\omega}_j^{(n,K)}=\psi^{(n,K)}\tilde{W}_j^{(K)}\quad\mbox{with}\quad \psi^{(n,K)}(\mathbf{x})=e^{\frac{1}{2\mu}(\mathbf{c}_n\cdot\mathbf{x})}\quad\mbox{and}\quad\eta_j^{(n,K)}=\gamma+\mu\left(\sigma_j^{(K)}+\frac{|\mathbf{c}_n|^2}{4\mu^2}\right),\forall j\in\mathbb{Z}.
\end{equation}
\end{proposition}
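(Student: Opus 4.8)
The plan is to use the classical exponential (gauge) change of unknown that symmetrizes the advection--diffusion--reaction operator and reduces it to the Dirichlet Laplacian. On the element $K$, with reaction coefficient $\gamma$, the operator $\mathcal{L}_{n,K}$ is the one associated through $\langle\mathcal{L}_{n,K}V,W\rangle=\gamma(V,W)+(\mathbf{c}_n\cdot\nabla V,W)+\mu(\nabla V,\nabla W)$, i.e. formally $\mathcal{L}_{n,K}V=\gamma V+\mathbf{c}_n\cdot\nabla V-\mu\Delta V$. First I would introduce the multiplication map $M\colon W\mapsto\psi^{(n,K)}W$ with $\psi^{(n,K)}(\mathbf{x})=e^{\frac{1}{2\mu}(\mathbf{c}_n\cdot\mathbf{x})}$ and record that, since $\mathbf{c}_n$ is a constant vector and $K$ is bounded, $\psi^{(n,K)}\in C^{\infty}(\bar K)$ is strictly positive and bounded above and below on $\bar K$; hence $M$ is a linear isomorphism of $H^1_0(K)$ onto itself that preserves the homogeneous Dirichlet boundary condition (and restricts to an isomorphism of $\tilde{X}_K$).

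Next I would carry out the substitution $V=\psi^{(n,K)}W$. Using $\nabla V=\psi^{(n,K)}\bigl(\nabla W+\tfrac{1}{2\mu}\mathbf{c}_n W\bigr)$ and $\Delta V=\psi^{(n,K)}\bigl(\Delta W+\tfrac{1}{\mu}\mathbf{c}_n\cdot\nabla W+\tfrac{|\mathbf{c}_n|^2}{4\mu^2}W\bigr)$, both valid because $\mathbf{c}_n$ is constant, one inserts these into $\mathcal{L}_{n,K}V$ and checks that the two advection contributions $\mathbf{c}_n\cdot\nabla W$ cancel and the zeroth--order terms combine, giving the identity
\begin{equation*}
\mathcal{L}_{n,K}\bigl(\psi^{(n,K)}W\bigr)=\psi^{(n,K)}\Bigl[-\mu\Delta W+\Bigl(\gamma+\tfrac{|\mathbf{c}_n|^2}{4\mu}\Bigr)W\Bigr].
\end{equation*}
From this the equivalence is immediate: $\mathcal{L}_{n,K}\bigl(\psi^{(n,K)}W\bigr)=\eta\,\psi^{(n,K)}W$ if and only if $-\mu\Delta W+\bigl(\gamma+\tfrac{|\mathbf{c}_n|^2}{4\mu}\bigr)W=\eta W$, i.e. $-\Delta W=\sigma W$ with $\sigma=\tfrac{1}{\mu}(\eta-\gamma)-\tfrac{|\mathbf{c}_n|^2}{4\mu^2}$, which rearranges to $\eta=\gamma+\mu\bigl(\sigma+\tfrac{|\mathbf{c}_n|^2}{4\mu^2}\bigr)$, exactly the relations (\ref{sigma}). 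Together with the bijectivity of $M$, this gives the stated if and only if between eigenpairs $(\psi^{(n,K)}\tilde{W}_j^{(K)},\eta_j^{(n,K)})$ of $\mathcal{L}_{n,K}$ on $\tilde{X}_K$ and eigenpairs $(\tilde{W}_j^{(K)},\sigma_j^{(K)})$ of $-\Delta$ on $H^1_0(K)$.

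The one point requiring care is the rigor of the pointwise differential manipulation, since $\mathcal{L}_{n,K}$ is a priori only defined weakly on $\tilde{X}_K\subset H^1_0(K)$; I expect this to be the main (though routine) obstacle. It can be handled either by performing the whole computation at the level of the bilinear form --- rewriting $\gamma(MW,M\tilde V)+(\mathbf{c}_n\cdot\nabla(MW),M\tilde V)+\mu(\nabla(MW),\nabla(M\tilde V))$ and integrating by parts (the boundary terms vanish because $M$ preserves $H^1_0(K)$) so that it reduces to $\mu(\nabla W,\nabla\tilde V)+(\gamma+\tfrac{|\mathbf{c}_n|^2}{4\mu})(W,\tilde V)$ --- or by invoking interior elliptic regularity (constant coefficients) so that the eigenfunctions lie in $H^2_{loc}(K)$ and the displayed identity holds almost everywhere. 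A concluding remark would also record the compatible weight $p_{n,K}=(\psi^{(n,K)})^{-2}=e^{-\frac{1}{\mu}(\mathbf{c}_n\cdot\mathbf{x})}\in C^1(\bar K)$ for which the $\{\tilde{\omega}_j^{(n,K)}\}$ are orthonormal in $L^2_{p_{n,K}}(K)$, so that Theorem \ref{maintheorem} applies.
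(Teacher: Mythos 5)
Your argument is correct: the gauge substitution $V=\psi^{(n,K)}W$ with $\psi^{(n,K)}=e^{\frac{1}{2\mu}\mathbf{c}_n\cdot\mathbf{x}}$ is computed correctly (the advective terms cancel, the zeroth-order terms combine), the resulting identity $\mathcal{L}_{n,K}(\psi^{(n,K)}W)=\psi^{(n,K)}\bigl[-\mu\Delta W+\bigl(\gamma+\tfrac{|\mathbf{c}_n|^2}{4\mu}\bigr)W\bigr]$ gives exactly the eigenvalue relation in (\ref{sigma}), and your remarks on the bijectivity of the multiplication map and on justifying the computation weakly (or via interior regularity) close the argument properly. However, your route differs from the paper's: the paper does not redo the exponential transformation at all. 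It cites Proposition 1 of \cite{ChaconDia}, which already states that $\bigl(\psi^{(n,K)}\tilde{W}_j^{(K)},\lambda_j^{(n,K)}\bigr)$ with $\lambda_j^{(n,K)}=\mu\bigl(\sigma_j^{(K)}+\tfrac{|\mathbf{c}_n|^2}{4\mu^2}\bigr)$ are the eigenpairs of the pure advection--diffusion operator, and then observes in one line, at the level of the bilinear form, that adding the reaction term contributes $\gamma\bigl(\tilde{\omega}_j^{(n,K)},V\bigr)_K$, i.e.\ the reaction is a multiple of the identity which leaves the eigenfunctions unchanged and shifts every eigenvalue by $\gamma$. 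So the paper's proof is a two-line reduction to the known stationary result, while yours re-derives that underlying result from scratch; what you gain is a self-contained verification (essentially reproving Proposition 1 of \cite{ChaconDia} with the reaction term included, plus the weak-formulation care), what the paper's version gains is brevity and the conceptual point that the reaction term acts purely as a spectral shift.
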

\begin{proof}
From Proposition 1 of \cite{ChaconDia}, we know that the couple $\left(\tilde{\omega}_j^{(n,K)},\lambda_j^{(n,K)}\right)$ is an eigenpair of the advection-diffusion operator if and only if the couple $\left(\tilde{W}_j^{(K)},\sigma_j^{(K)}\right)$ is an eigenpair of the Laplace operator $-\Delta$ in $H_0^1(K),$ where
$$\tilde{\omega}_j^{(n,K)}=\psi^{(n,K)}\tilde{W}_j^{(K)}\quad\mbox{with}\quad \psi^{(n,K)}(\mathbf{x})=e^{\frac{1}{2\mu}(\mathbf{c}_n\cdot\mathbf{x})}\quad\mbox{and}\quad\lambda_j^{(n,K)}=\mu\left(\sigma_j^{(K)}+\frac{|\mathbf{c}_n|^2}{4\mu^2}\right),\forall j\in\mathbb{N}.$$
Therefore, 
$$\begin{array}{l}
\gamma\left(\tilde{\omega}_j^{(n,K)},V\right)_K+\left(\mathbf{c}_n\cdot\nabla \tilde{\omega}_j^{(n,K)},V\right)_K+\mu \left(\nabla \tilde{\omega}_j^{(n,K)},\nabla V\right)_K\\ \noalign{\smallskip}
=\gamma\left(\tilde{\omega}_j^{(n,K)},V\right)_K+\lambda_j^{(n,K)}\left(\tilde{\omega}_j^{(n,K)},V\right)_K=
(\gamma+\lambda_j^{(n,K)})\left(\tilde{\omega}_j^{(n,K)},V\right)_K,
\end{array}$$
which concludes the proof.
\end{proof}
Note that, from expression (\ref{sigma}) when $\gamma=0,$ 
\begin{equation}
\eta_j^{(n,K)}=\mu\left(\sigma_j^{(K)}+\frac{|\mathbf{c}_n|^2}{4\mu^2}\right)=\lambda_j^{(n,K)}
\end{equation}
and we recover the eigenvalues of the advection-diffusion operator. Note also that the eigenfunctions do not depend on the reaction term, so they coincide to those of the advection-diffusion operator.
\SF{Moreover, the weight functions are given by
\begin{equation}\label{pesos}
p_{n,K}= (\psi^{(n,K)})^{-2}=e^{-\frac{1}{\mu}(\mathbf{c}_n\cdot\mathbf{x})},
\end{equation}
where $\psi^{(n,k)}$ is given in expression (\ref{sigma}).}

The eigenpairs of the Laplace operator can be exactly computed for elements with simple geometrical forms, as is the case of parallelepipeds. In the 1D case, these are
\begin{equation}\label{sigma2}
\tilde{W}_j^{(K)}=\sin\left(\sqrt{\sigma_j^{(K)}}(x-x_l)\right)\quad\mbox{with}\quad \sigma_j^{(K)}=\left(\frac{j\pi}{x_l-x_{l-1}}\right)^2 \quad\mbox{for} \quad l=1...,N, j\in\mathbb{Z}.
\end{equation}

Now that we have computed the eigenpair for the stationary advection-diffusion-reaction problem (\ref{EADR}), just by choosing $\gamma=1,$ $\mathbf{c}_n=k\mathbf{c}_n$ and $\mu=k\mu,$ we obtain the eigenpairs of the operator of equation (\ref{ecu_abstractdis}) with the bilinear form given in (\ref{def_a}).
Hence, from Theorem \ref{maintheorem} and bearing in mind Theorem 2 of \cite{ChaconDia}, one can explicitly compute the term $B_n(\Pi^M_n(R_n(U_{h,M}^{n+1})),V_h)$ (see Section \ref{numerics}). 
\section{Error Estimates}\label{secestimates}
In this section we estimate the error of the VMS spectral method in their application to the evolutive advection-diffusion problem. We distinguish two cases, the diffusion dominated regime
 and the advection dominated regime. In both cases, we will assume that $f$ and $\mathbf{c}_n$ are piecewise constant on each element $K,$ and will be denoted by $f_K$ and $\mathbf{c}_{n,K}.$ 

For the sake of brevity, we are going to use the following notation
\begin{equation*}\label{definf}
\begin{array}{l}
\displaystyle p_{\infty}=\max_{K\in\mathcal{T}_h, n=0,\cdots,N} \|p_{n,K}\|_\infty,\quad q_{\infty}=\max_{K\in\mathcal{T}_h, n=0,\cdots,N} \|p_{n,K}^{-1}\|_\infty \quad\mbox{and}\quad \|\mathbf{c}_n\|_\infty=\max_{K\in\mathcal{T}_h, n=0,\cdots,N}\|\mathbf{c}_{n,K}\|_\infty,\\ \noalign{\smallskip}
\end{array}
\end{equation*}
\SF{where $p_{n,K}$ is defined in expression (\ref{pesos}).}

The approximations $f^{n+1}$ to $f(t_{n+1})$ are assumed to verify
\begin{equation} \label{propefe}
 \sum_{n=0}^{N-1} k \|f^{n+1}\|^2 \le \|f\|_{L^2(L^2)}^2.
 \end{equation}
 This holds in particular when $f^{n+1}$ is the average value of $f$ in $(t_n,t_{n+1})$. Moreover, we consider a piecewise affine discretization. We assume that the domain $\Omega$ is polygonal and consider a triangulation ${\cal T}_h$ of $\Omega$. The discretization space for the large scales is defined as
$$
X_h=\{ v_h \in C^0(\overline{\Omega})\,|\, {v_h}_K \in P_1(K),\,\forall K \in {\cal T}_h\, \}.
$$
We further  assume that $h/k=A$ for some constant $A$. Before presenting the error estimates, let us state an auxiliary result.
\begin{lemma}\label{lemmabeta}
 For $M$ large enough, $\beta_j^{K,n}$ defined in expression (\ref{series}) satisfies that
\begin{equation}\label{cotabeta}
\sum_{j=0}^{M} \beta_j^{K,n}\leq c_\beta\frac{ h^2}{k},
\end{equation}
where $c_\beta$ is a positive constant.
\end{lemma}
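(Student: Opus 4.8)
The plan is to use the explicit description of the spectrum. In the one-dimensional piecewise affine setting, Proposition~\ref{propADR} together with formula~(\ref{sigma2}) gives, for the non-trivial modes $j\ge 1$,
\begin{equation*}
\lambda_j^{(n,K)}=\mu\left(\frac{j^2\pi^2}{h_K^2}+\frac{|\mathbf{c}_{n,K}|^2}{4\mu^2}\right),
\qquad\text{hence}\qquad
\Lambda_j^{K,n}=1+\frac{k\mu\pi^2 j^2}{h_K^2}+\frac{k|\mathbf{c}_{n,K}|^2}{4\mu},
\end{equation*}
where $h_K$ denotes the length of the element $K$ (the index $j=0$ yields the trivial eigenfunction $\hat z_0^{(n,K)}\equiv 0$ and contributes nothing to the sub-scale expansion). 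Since the three summands on the right are nonnegative, $\Lambda_j^{K,n}\ge k\mu\pi^2 j^2/h_K^2$, so that
\begin{equation*}
\beta_j^{K,n}=(\Lambda_j^{K,n})^{-1}\ \le\ \frac{h_K^2}{k\mu\pi^2 j^2},\qquad j\ge 1.
\end{equation*}

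First I would sum this pointwise bound over $j$, using the convergence of $\sum_{j\ge1}j^{-2}=\pi^2/6$ and $h_K\le h$:
\begin{equation*}
\sum_{j=1}^{M}\beta_j^{K,n}\ \le\ \frac{h_K^2}{k\mu\pi^2}\sum_{j=1}^{M}\frac{1}{j^2}\ \le\ \frac{h_K^2}{6k\mu}\ \le\ \frac{1}{6\mu}\,\frac{h^2}{k},
\end{equation*}
which gives~(\ref{cotabeta}) with $c_\beta=1/(6\mu)$. The constant is independent of the truncation parameter $M$ precisely because the series of the $\beta_j^{K,n}$ converges, so the estimate in fact holds for every $M\ge 1$. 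If one carries the scalar $\beta_0^{K,n}=(1+k|\mathbf{c}_{n,K}|^2/(4\mu))^{-1}$ explicitly, it is bounded by $4\mu/(k|\mathbf{c}_{n,K}|^2)$, which under the standing assumption $h/k=A$ is of order $(h^2/k)\,\mathrm{Pe}^{-2}$ in the convection-dominated regime and is thus absorbed into $c_\beta$, at the cost of a constant depending additionally on $A$ and $\|\mathbf{c}_n\|_\infty$.

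The computation itself is routine once the spectrum is at hand; the steps that deserve a word of care are (i) discarding the additive $1$ (and the reaction contribution) in the lower bound for $\Lambda_j^{K,n}$, which is exactly what produces the clean $h^2/k$ scaling rather than an $O(1)$ bound, and (ii) the uniformity of $c_\beta$ in $M$ — the role of the hypothesis ``for $M$ large enough'' — which is immediate in one dimension from $\sum_{j\ge1}j^{-2}<\infty$, but which would force $M$ to stay bounded in space dimension $d\ge 2$, where the corresponding eigenvalue sum over multi-indices diverges. I expect item (ii), i.e.\ the interaction between the truncation level and the sought uniform bound, to be the only mild obstacle; the rest is an explicit estimate on the known eigenvalues.
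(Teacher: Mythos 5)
Your proposal is correct and follows essentially the same route as the paper: both use the explicit 1D eigenvalues $\sigma_j^{(K)}=(j\pi/h)^2$ from Proposition~\ref{propADR}, drop the lower-order terms in $\Lambda_j^{K,n}$ to get $\beta_j^{K,n}\le h^2/(k\mu\pi^2 j^2)$, and conclude by the convergence of $\sum_j j^{-2}$. Your explicit treatment of the $j=0$ index and the remark on uniformity in $M$ only make precise what the paper leaves implicit.
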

\begin{proof}
From expression (\ref{series}),
$
\beta_j^{K,n} =(\Lambda_j^{K,n})^{-1},
$
and from Proposition \ref{propADR}, 
\begin{equation*}
 \Lambda_j^{K,n}=1+k\mu\left(\sigma_j^{(K)}+\frac{|\mathbf{c}_n|^2}{4\mu^2}\right)=1+k\mu \left(\frac{j\pi}{h}\right)^2+k\frac{|\mathbf{c}_n|^2}{4\mu},\quad\forall j\in\mathbb{Z}.
 \end{equation*}
Therefore, for $M$ large enough, taking into account that $\sum_{j=0}^{\infty}\frac{1}{j^2}=\frac{\pi^2}{6},$
\begin{equation*}\label{ineqbeta}
\sum_{j=0}^{M} \beta_j^{K,n}\leq c\sum_{j=0}^{M} \frac{h^2}{j^2 k}\leq c_\beta\frac{ h^2}{k},
\end{equation*}
being $c$ and $c_\beta$ positive constants, which concludes the proof.
\end{proof}
In the sequel the norms without subindex will denote the $L^2$ norm, when this will not be source of confusion. Also for brevity we shall denote by $L^p(L^q)$ the space $L^p((0,T),L^q(\Omega))$ and similarly $L^p (H^k)$.

Now, we proceed to perform the error estimates, beginning with the diffusion dominated flow case.
\subsection{Diffusion dominated regime}
\begin{theorem} 
Let 
\begin{equation}\label{defnu1}
\nu_1=2k\mu-2c_{\beta} h^2k\|\mathbf{c}\|_\infty(p_\infty+2q_\infty)
\end{equation}
and 
\begin{equation}\label{defnu2}
\nu_2=2k\mu-3c_{\beta}h^2\pi^2k\|\mathbf{c}\|_\infty(p_\infty-q_\infty),
\end{equation}
where $p_\infty,$ $q_\infty$ and  $\|\mathbf{c}\|_\infty$ are defined in expression (\ref{definf}) and $c_\beta$ is given in Lemma \ref{lemmabeta},.
Assume that $h,k$ are such that $\nu_1, \nu_2 \le \alpha k \mu,$ for some $\alpha>0$ and that the exact solution of problem \eqref{WEAD} satisfies $U\in H^1(L^2)$. Then, there exists a positive constant $c_d$ independent of $h$ and $k,$ such that,
\begin{equation*}\label{desdiff}
\begin{array}{l}
\|\delta_h\|^2_{L^\infty(L^2)}+\mu\|\delta_h\|^2_{L^2(H^1)}\leq c_d (1+C_dT)e^{4TC}(\|\delta_{h}^0\|^2+h^2 (\|f\|_{L^2(L^2)}^2+\|U_{eh}^0\|^2)),
\end{array}
\end{equation*}
where 
\begin{equation}\label{valorCd}
C_d=4c_{\beta}\pi^2q_\infty.
\end{equation}

\end{theorem}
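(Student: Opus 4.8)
\emph{Proof plan.} The plan is to introduce a projection $U_{eh}^{n}\in X_h$ of the exact solution $U(t_n)$ (for instance the Lagrange interpolant, or the elliptic/Ritz projection associated with $a^{n}$), to split the error as $U(t_n)-U_{h,M}^{n}=\rho^n+\delta_h^n$ with $\rho^n=U(t_n)-U_{eh}^{n}$ controlled by standard approximation estimates, and to derive a discrete evolution equation for $\delta_h^n\in X_h$, which is the quantity to be estimated. First I would obtain this error equation by subtracting the scheme \eqref{ecuVMS_M} from the time-discretized weak formulation \eqref{WEAD} tested against $V_h\in X_h$; the right-hand side collects the Backward Euler time-truncation residual $\tau^{n+1}=\partial_tU(t_{n+1})-k^{-1}\big(U(t_{n+1})-U(t_n)\big)$, the projection defects (involving $\rho^{n+1}-\rho^n$ and $\nabla\rho^{n+1}$), the data perturbation $f^{n+1}-f_K$, and the stabilization contribution $B_n\big(\Pi^M_n(R_n(U_{h,M}^{n+1})),V_h\big)$.

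Next I would test the error equation with $V_h=\delta_h^{n+1}$. The parabolic part yields, through $2(\delta_h^{n+1}-\delta_h^n,\delta_h^{n+1})=\|\delta_h^{n+1}\|^2-\|\delta_h^n\|^2+\|\delta_h^{n+1}-\delta_h^n\|^2$, the telescoping quantity that controls $\|\delta_h\|_{L^\infty(L^2)}$; the symmetric part of $a^{n+1}$ supplies the coercive term $k\mu\|\nabla\delta_h^{n+1}\|^2$, and the advection term $(\mathbf{c}_n\cdot\nabla\delta_h^{n+1},\delta_h^{n+1})$ vanishes since $\mathbf{c}_n$ is divergence-free and the boundary data are homogeneous. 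The delicate step is the stabilization term: using the spectral expansion of Theorem \ref{maintheorem}, the fact that $U_{h,M}^{n+1}$ is piecewise affine so that $-\mu\Delta U_{h,M}^{n+1}\equiv0$ on each $K$ and $R_n(U_{h,M}^{n+1})$ reduces to reaction, advection and data parts, and expanding $B_n\big(\Pi^M_n(\cdot),\cdot\big)$ over the eigenmodes $\hat{z}_j^{(n,K)}$, I would isolate a nonnegative (dissipative) contribution and several cross terms coupling the advected large scales to the sub-scales. These cross terms are bounded elementwise by Cauchy--Schwarz and Young's inequality, using Lemma \ref{lemmabeta}, i.e.\ $\sum_{j=0}^{M}\beta_j^{K,n}\le c_\beta h^2/k$, together with the weight bounds $p_\infty$ and $q_\infty$; the quantities $\nu_1,\nu_2$ of \eqref{defnu1}--\eqref{defnu2} are exactly what is left in front of the gradient terms on the left-hand side after this absorption, and the hypothesis $\nu_1,\nu_2\le\alpha k\mu$ guarantees that the remaining negative contributions do not destroy the $k\mu\|\nabla\delta_h^{n+1}\|^2$ coercivity.

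Then I would estimate the right-hand side: $\sum_n k\|\tau^{n+1}\|^2\le c\,k^2\|U\|_{H^1(L^2)}^2$, turned into an $O(h^2)$ term via the hypothesis $h/k=A$; the data term, controlled by \eqref{propefe} and by the piecewise-constant approximation of $f$, produces $h^2\|f\|_{L^2(L^2)}^2$; the projection and initial-data defects produce $h^2\|U_{eh}^0\|^2$ together with higher-order terms that can be absorbed on the left. Summing over $n=0,\dots,m-1$ for arbitrary $m\le N$ and moving the dissipative and telescoping parts to the left leaves a term of the form $k\sum_n\|\delta_h^n\|^2$, with constant essentially $4C$ arising from the advection/sub-scale and projection contributions, to which the discrete Gronwall lemma applies and produces the factor $e^{4TC}$; feeding the resulting $L^\infty(L^2)$ bound back into the $\mu k\sum_n\|\nabla\delta_h^n\|^2$ estimate, where the sub-scale stabilization leaves a residual $C_d\,k\sum_n\|\delta_h^n\|^2\le C_dT\|\delta_h\|^2_{L^\infty(L^2)}$ with $C_d=4c_\beta\pi^2q_\infty$ as in \eqref{valorCd}, produces the extra prefactor $(1+C_dT)$.

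The main obstacle is precisely the bookkeeping of the stabilization term: identifying which pieces are dissipative, which must be absorbed into $\mu k\|\nabla\delta_h^{n+1}\|^2$, and which feed the Gronwall constant, while tracking the dependence on $p_\infty$, $q_\infty$, $c_\beta$ and $\|\mathbf{c}\|_\infty$ so that $\nu_1$, $\nu_2$ and $C_d$ come out with the stated expressions.
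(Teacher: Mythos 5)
There is a genuine gap: you have misidentified the quantity the theorem estimates. In this paper $U_{eh}^{n+1}$ is not a projection (interpolant or Ritz projection) of the exact solution $U(t_{n+1})$; it is the solution of the exact VMS large-scale problem (\ref{ecuVMS}), in which the sub-scales are condensed exactly, and $\delta_h^{n+1}=U_{eh}^{n+1}-U_M^{n+1}$ measures only the effect of truncating the spectral expansion of the sub-scales to $M$ modes. Consequently the paper's error equation is obtained by subtracting the two discrete formulations (\ref{ecuVMS}) and (\ref{ecuVMS_M}); neither the Backward Euler truncation residual $\partial_t U(t_{n+1})-k^{-1}\bigl(U(t_{n+1})-U(t_n)\bigr)$ nor any interpolation/projection defect enters the argument. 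What remains on the right-hand side are only the modes $j\le M$ acting on $\delta_h$ (the term $T_K^A$) and the tail $j>M$ acting on the residual of $U_{eh}$ (the term $T_K^B$); these are bounded via Cauchy--Schwarz and Young, the $L^2_{p_K}$-orthonormality of the eigenfunctions, and Lemma \ref{lemmabeta}, which is precisely where the factor $h^2$ and the weights $p_\infty$, $q_\infty$ come from. Your plan instead compares $U_{h,M}^n$ with the exact PDE solution: under the stated hypothesis $U\in H^1(L^2)$ only, the projection defects $\nabla\rho^{n+1}$ and $(\rho^{n+1}-\rho^n)/k$ cannot be bounded by $h^2\bigl(\|f\|^2_{L^2(L^2)}+\|U_{eh}^0\|^2\bigr)$ (one would need at least $U\in L^2(H^2)$ and $H^1$-in-time regularity of $\nabla U$), and the bound you would obtain could not have $\|U_{eh}^0\|^2$ on the right-hand side as in the statement. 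So the route you propose targets a different statement and does not go through under the given assumptions.

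A second, related omission is that your outline gives no mechanism producing the two distinct constants $\nu_1$ and $\nu_2$. In the paper, $\nu_1$ arises from absorbing the sub-scale terms in the error equation for $\delta_h$, while $\nu_2$ (and the constant $\tilde C_d$) arise from a \emph{separate} stability estimate for $U_{eh}$ itself, obtained by applying the same testing argument to the first equation of (\ref{ecuVMSandM}); this second estimate is indispensable because $\theta_n$ contains $\|\nabla U_{eh}^{n+1}\|^2$ and $\|(U_{eh}^{n+1}-U_{eh}^n)/k\|^2$, and it is what converts those quantities into $\|f\|^2_{L^2(L^2)}+\|U_{eh}^0\|^2$ in the final bound. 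Your proposal collapses this into a single absorption step, so the structure of the stated estimate (in particular the role of the hypothesis $\nu_2\le\alpha k\mu$ and the appearance of $\|U_{eh}^0\|^2$) is not recovered.
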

\begin{proof}
Consider $U_{eh}^{n+1},$ solution of the large scales problem (\ref{ecuVMS}) and $U_{M}^{n+1},$ solution of the of the VMS spectral method (\ref{ecuVMS_M}), that is,
\begin{equation}\label{ecuVMSandM}
\begin{array}{l}
B(U_{eh}^{n+1},V_h)+B(\Pi(R(U_{eh}^{n+1})),V_h)=l^{n+1}(V_h), \quad \forall V_h\in X_h,\\ \noalign{\smallskip}
B(U_{M}^{n+1},V_h)+B(\Pi^M(R(U_{M}^{n+1})),V_h)=l^{n+1}(V_h), \quad \forall V_h\in X_h.\\ \noalign{\smallskip}
\end{array}
\end{equation}
Substracting both equations and defining $\delta_h^{n+1}=U_{eh}^{n+1}-U_M^{n+1},$ we obtain,
\begin{equation*}\label{error2}
\begin{array}{l}
B(\delta_h^{n+1},V_h)+\displaystyle\sum_{K\in\mathcal{T}_h}\sum_{j=0}^{M}\beta_j^{K,t} \langle \delta_h^n+\tilde{\mathcal{L}}^*(-\delta_h^{n+1}),p_K \hat{z}_j^{(K)}\rangle(\tilde{\mathcal{L}}^*V_h,\hat{z}_j^{(K)})+\\ \noalign{\smallskip}
\displaystyle\sum_{K\in\mathcal{T}_h}\sum_{j>M}\beta_j^{K,t} \langle R(U_{eh}^{n+1}),p_K \hat{z}_j^{(K)}\rangle(\tilde{\mathcal{L}}^*V_h,\hat{z}_j^{(K)})=(\delta_h^{n},V_h), \quad \forall V_h\in X_h,\\ \noalign{\smallskip}
\end{array}
\end{equation*}
where $$\tilde{\mathcal{L}}^*_KV_h= V_h-k \mathbf{c}_K\cdot  \nabla V_h -k\mu \Delta V_h=V_h-k \mathbf{c}_k\cdot \nabla V_h,$$
on each element $K\in\mathcal{T}_h.$

Taking $V_h=\delta_h^{n+1},$
\begin{equation}\label{error3}
\begin{array}{l}
(\delta_h^{n+1}-\delta_h^n,\delta_h^{n+1})+ k\mu (\nabla\delta_h^{n+1},\nabla \delta_h^{n+1})
=\displaystyle\sum_{K\in\mathcal{T}_h}\left(T_K^A+T_K^B\right),\\ \noalign{\smallskip}
\end{array}
\end{equation}
where we have denoted
\begin{equation*}
\begin{array}{l}
T_K^A=\displaystyle\sum_{j=0}^{M}\beta_j^{K,t} \langle \tilde{\mathcal{L}}^*\delta_h^{n+1}-\delta_h^n,p_K \hat{z}_j^{(K)}\rangle(\tilde{\mathcal{L}}^*\delta_h^{n+1},\hat{z}_j^{(K)}),\\  \noalign{\smallskip}
T_K^B=-\displaystyle\sum_{j>M}\beta_j^{K,t} \langle R(U_{eh}^{n+1}),p_K \hat{z}_j^{(K)}\rangle(\tilde{\mathcal{L}}^*\delta_h^{n+1},\hat{z}_j^{(K)}).
\end{array}
\end{equation*}
Applying successively Young and Cauchy-Schwarz inequalities we get, 
\begin{equation*}
\begin{array}{rcl}
T_K^A&\leq &\displaystyle \sum_{j=0}^{M}\displaystyle\frac{1}{2}\beta_j^{K,t}\| \delta_h^{n+1}-\delta_h^n+k \mathbf{c}_K\cdot \nabla \delta_h^{n+1}\|^2 \|p_K \hat{z}_j^{(K)}\|^2\\  \noalign{\smallskip}
&&\displaystyle\frac{1}{2}\beta_j^{K,t}\| \delta_h^{n+1}-k \mathbf{c}_K\cdot \nabla \delta_h^{n+1}\|^2 \| \hat{z}_j^{(K)}\|^2
\\  \noalign{\smallskip}
&\leq&\displaystyle\frac{h^2c_{\beta}}{k}\left(\| p_K\|_{\infty}(\| \delta_h^{n+1}-\delta_h^n\|^2+k^2\|\mathbf{c}_K\|_\infty^2\|\nabla \delta_h^{n+1} \|^2)+\right.
\\  \noalign{\smallskip}
&&\left.\displaystyle \| p_K^{-1}\|_{\infty}(\| \delta_h^{n+1}\|^2+k^2\|\mathbf{c}_K\|_\infty^2\|\nabla \delta_h^{n+1} \|^2)\right),
\end{array}
\end{equation*}
where in the second step, we have applied the facts that functions $\hat{z}_j^{(K)}$ form an orthonormal system in $L^2_{p_K}(K)$ and
Lemma \ref{lemmabeta}.

Working in a similar manner with $T_B^K,$ we find that
\begin{equation*}
\begin{array}{rcl}
T_K^B &\leq&\displaystyle\frac{h^2c_{\beta}}{k}\left(\frac{3}{2}\| p_K\|_{\infty}(k^2\|f_K^{n+1}\|^2+k^2\left\|\frac{ U_{eh}^{n+1}-U_{eh}^n}{k}\right\|^2+k^2 \|\mathbf{c}_K\|_\infty^2\|\nabla U_{eh}^{n+1} \|^2)+\right.
\\  \noalign{\smallskip}
&&\left.\displaystyle \| p_K^{-1}\|_{\infty}(\| \delta_h^{n+1}\|^2+k^2\|\mathbf{c}_K\|_\infty^2\|\nabla \delta_h^{n+1} \|^2)\right).
\end{array}
\end{equation*}

Bearing in mind that $(\delta_h^{n+1}-\delta_h^n,\delta_h^{n+1})=(\|\delta_h^{n+1}\|^2-\|\delta_h^{n}\|^2+\|\delta_h^{n+1}-\delta_h^{n}\|^2)/2,$ 
from expression (\ref{error3}), we obtain
\begin{equation}\label{ineqdiff1}
\left(1-C_dk\right)\|\delta_h^{n+1}\|^2+\nu_1\|\nabla\delta_h^{n+1}\|^2\leq \|\delta_h^n\|^2+\theta_n,
\end{equation}
where $\nu_1$ is defined in equation (\ref{defnu1}), $C_d$ is given in expression (\ref{valorCd})
and 
\begin{equation*}\label{theta}
\theta_n=\displaystyle 2c_{\beta}h^2p_{\infty}k\|f^{n+1}\|^2+6c_{\beta} h^2p_\infty k \left\|\frac{ U_{eh}^{n+1}-U_{eh}^n}{k}\right\|^2+3c_{\beta}h^2k \|\mathbf{c}\|_\infty^2p_\infty \|\nabla U_{eh}^{n+1}\|^2.
\end{equation*}
Now, assuming that $h,k$ are such that $\nu_1\simeq k\mu,$ it is possible to apply the discrete Gronwall lemma and obtain,

\begin{equation}\label{desdiff}
\begin{array}{rcl}
\displaystyle\max_{1\leq n\leq N}\|\delta_h^{n}\|^2&\leq &       \displaystyle e^{4TC_d}\left(\|\delta_{h}^0\|^2+4c_{\beta}h^2p_\infty\|f\|_{L^2(L^2)}^2 +6c_{\beta}h^2p_\infty\sum_{n=0}^{N-1}k \left\|\frac{ U_{eh}^{n+1}-U_{eh}^n}{k}\right\|^2\right.\\ \noalign{\smallskip}
&&\displaystyle\left.6c_{\beta}h^2\|\mathbf{c}\|_\infty p_\infty\|U_{eh}^{n+1}\|_{L^2(H^1)}^2\right).
\end{array}
\end{equation}
As from assumption $U\in H^1(L^2),$ it is standard that there exists some constant $c_e$ such that 
\begin{equation}\label{cotaUdif}
\sum_{n=0}^{N-1}k \left\|\frac{ U_{eh}^{n+1}-U_{eh}^n}{k}\right\|^2<c_e.
\end{equation}
Now, to obtain an appropriate bound for $\| U_{eh}^{n+1}\|^2_{L^2(H^1)},$ it is necessary to apply similar reasoning to that used in this proof so far, to the first equation of expression (\ref{ecuVMSandM}), from where we obtain

\begin{equation}\label{ineqdiffU}
\left(1-\tilde{C}_dk\right)\|U_{eh}^{n+1}\|^2+\nu_2\|\nabla U_{eh}^{n+1}\|^2\leq \|U_{eh}^n\|^2+(1+3c_{\beta}h^2p_\infty)k\|f^{n+1}\|^2,
\end{equation}
where 
\begin{equation*}\label{tildeCd}
\begin{array}{rcl}
\displaystyle\tilde{C}_d=1+\frac{2c_{\beta}h^2q_\infty}{k^2}
\end{array}
\end{equation*}
and $\nu_2$ is defined in equation (\ref{defnu2}).
Now, assuming that $h,k$ are such that $\nu_2\simeq k\mu,$ it is possible to apply the discrete Gronwall lemma and obtain,
\begin{equation}\label{desdiffU}
\begin{array}{rcl}
\displaystyle\max_{1\leq n\leq N}\|U_{eh}^{n}\|^2&\leq &       \displaystyle e^{4T\tilde{C_d}}\left(\|U_{eh}^0\|^2+2\left(1+3c_{\beta}h^2p_\infty\right)\|f\|_{L^2(L^2)}^2\right).\\ \noalign{\smallskip}
\end{array}
\end{equation}
Summing up with respect to $n$ in inequality (\ref{ineqdiffU}), it follows that,
\begin{equation}\label{cotaL2H1U}
\begin{array}{l}
\|U_{eh}^N\|^2+\mu \| U_{eh}\|^2_{L^2(H^1)}\leq \displaystyle(1+\tilde{C}_dT) \max_{1\leq n\leq N}\|U_{eh}^{n}\|^2+\left(1+3c_{\beta}h^2p_\infty\right)\|f\|^2_{L^2(L^2)} \\ \noalign{\smallskip}
\leq \displaystyle(1+\tilde{C}_dT) e^{4T\tilde{C}_d}\|U_{eh}^{0}\|^2+\left(1+3c_{\beta}h^2 p_\infty\right)\left(2(1+\tilde{C}_dT) e^{4T\tilde{C}_d}+1\right)\|f\|^2_{L^2(L^2)}:=A,
\end{array}
\end{equation}
where in the second inequality we have taken into account inequality (\ref{desdiffU}).Therefore, 
\begin{equation}\label{cotaL2H1U2}
\begin{array}{l}
 \| U_{eh}\|^2_{L^2(H^1)}\leq A/\mu,
\end{array}
\end{equation}
where $A$ is defined in (\ref{cotaL2H1U}). 

Finally, summing up with respect to $n$ in inequality (\ref{ineqdiff1}), bearing in mind that $\nu_1\simeq k\mu$ and inequalities (\ref{desdiff}), (\ref{cotaUdif}) and  (\ref{cotaL2H1U2}), the conclusion follows. 

\end{proof}
Now, we proceed to analyze the advection dominated flow case.
\subsection{Advection dominated regime}
To better understand the effect of adding the stabilizing term into equation (\ref{ecuVMS_M}) in this case,  we proceed to its computation through the Green's function technique.

From 
\cite{brezzi}, the stabilizing term in equation (\ref{ecuVMS}) can also be written as
\begin{equation}\label{greenbrezzi}
B_n(\Pi_n(R_n(U_h^{n+1})),V_h)=\displaystyle\sum_K\int_{K\times K}g_y^{(n,K)}(x)(R_n(U_h^{n+1}))(y)(\tilde{\mathcal{L}}_{n,K}^* V_h)(x)dxdy,
\end{equation}
where $R_n(U_h^{n+1})$ is the residual defined in expression (\ref{residuo}),     $\tilde{\mathcal{L}}_{n,K}^*$ is the formal adjoint of $\tilde{\mathcal{L}_n}_K$ (with zero boundary conditions on $\partial K$), and for $y\in K,$ $g_y^{(n,K)}$ is the element Green's function of our problem,
\begin{equation*}\label{greenpbm}
\left\{\begin{array}{ll}
\tilde{\mathcal{L}_n}_K g_y^{(n,K)}=\delta_y & \mbox{in }K,\\ \noalign{\smallskip}
g_y^{(n,K)}=0& \mbox{on }\partial K.
\end{array}\right.
\end{equation*}

On the other hand, $\tilde{U}^{n+1}_K$ is the solution of
\begin{equation*}\label{Utildepbm}
\left\{\begin{array}{ll}
\tilde{\mathcal{L}_n}_K \tilde{U}^{n+1}_K=R_n(U_h^{n+1}) & \mbox{in }K,\\ \noalign{\smallskip}
 \tilde{U}^{n+1}_K=0& \mbox{on }\partial K.
\end{array}\right.
\end{equation*}
From Theorem \ref{maintheorem},  we can develop
\begin{equation*}\label{Udevelop}
\tilde{U}_K^{n+1}(x)=\displaystyle\sum_{j=0}^{\infty}\beta_j^{K,n} \langle R_n(U_h^{n+1}),p_{n,K} \hat{z}_j^{(n,K)}\rangle \hat{z}_j^{(n,K)}(x).
\end{equation*}
Hence, an adaptation of Theorem  \ref{maintheorem} just changing $R_n(U_h^{n+1})$ by $\delta_y,$ allows us to write the Green function in terms of the eigenfunctions of operator $\tilde{\mathcal{L}_n}_K$ as
\begin{equation}\label{greenseries}
g_y^{(n,K)}(x) =\displaystyle\sum_{j=0}^{\infty}\beta_j^{K,n} \langle \delta_y,p_{n,K} \hat{z}_j^{(n,K)}\rangle \hat{z}_j^{(n,K)}=\displaystyle\sum_{j=0}^{\infty}\beta_j^{K,n} \left(p_{n,K} \hat{z}_j^{(n,K)}\right)(y) \hat{z}_j^{(n,K)}(x).
\end{equation}
We can think of the spectral method (\ref{ecuVMS}) as an alternative form of computing the Green's function, and of the feasible discretization (\ref{ecuVMS_M}), as a truncation of the series (\ref{greenseries}), namely
\begin{equation}\label{greenseriestrun}
g_y^{(n,K,M)}(x) =\displaystyle\sum_{j=0}^{M}\beta_j^{K,n} \left(p_{n,K} \hat{z}_j^{(n,K)}\right)(y) \hat{z}_j^{(n,K)}(x).
\end{equation}
Note that function $g_y^{(n,K,M)}$ corresponds to the solution of
\begin{equation*}\label{greenpbmM}
\left\{\begin{array}{ll}
\tilde{\mathcal{L}_n}_K g_y^{(n,K,M)}=\delta_y^M & \mbox{in }K,\\ \noalign{\smallskip}
g_y^{(n,K,M)}=0& \mbox{on }\partial K,
\end{array}\right.
\end{equation*}
where we define
\begin{equation*}
(\delta^M_y,V)=\langle \delta_y,V \rangle, \quad\forall V\in \mathcal{V}_M,
\end{equation*}
with $ \mathcal{V}_M=\mathcal{L}_n\{ \hat{z}_1^{(n,K)},\ldots,  \hat{z}_M^{(n,K)}\}.$

In the following lemma, we prove that $g_y^{(n,K,M)}\in L^2(K\times K).$
\begin{lemma}\label{lemagreen}
Function $g_y^{(n,K,M)}$ defined in expression (\ref{greenseriestrun}) satisfies that $g_y^{(n,K,M)}\in L^2(K\times K)$ and
\begin{equation}\label{cotag}
\displaystyle\int_{K\times K}|g_y^{(n,K,M)}(x)|^2dxdy\leq \|p_{n,K}^{-1}\|_{\infty}  \|p_{n,K}\|_{\infty} \frac{\pi^4 h^4}{36k^2}.
\end{equation}
\end{lemma}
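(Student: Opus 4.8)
The idea is to expand $g_y^{(n,K,M)}$ in the spectral basis and to use that the eigenfunctions $\{\hat{z}_j^{(n,K)}\}_j$ are orthonormal in the \emph{weighted} space $L^2_{p_{n,K}}(K)$ rather than in $L^2(K)$; the passage between the two inner products is precisely what produces the factors $\|p_{n,K}\|_\infty$ and $\|p_{n,K}^{-1}\|_\infty$ on the right-hand side of \eqref{cotag}.

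First I would fix $y\in K$ and regard $x\mapsto g_y^{(n,K,M)}(x)=\sum_{j=0}^{M} c_j(y)\,\hat{z}_j^{(n,K)}(x)$ as a finite linear combination of eigenfunctions, with coefficients $c_j(y)=\beta_j^{K,n}\,\bigl(p_{n,K}\hat{z}_j^{(n,K)}\bigr)(y)$. Since $p_{n,K}\in C^1(\bar K)$ satisfies $p_{n,K}(x)\ge 1/\|p_{n,K}^{-1}\|_\infty$ pointwise in $K$, the plain $L^2(K)$ norm is controlled by the weighted one, and orthonormality in $L^2_{p_{n,K}}(K)$ gives
\[
\int_K |g_y^{(n,K,M)}(x)|^2\,dx \le \|p_{n,K}^{-1}\|_\infty \int_K p_{n,K}(x)\,|g_y^{(n,K,M)}(x)|^2\,dx = \|p_{n,K}^{-1}\|_\infty \sum_{j=0}^{M} |c_j(y)|^2 .
\]
Next I would integrate in $y$: using $p_{n,K}(y)^2\le \|p_{n,K}\|_\infty\,p_{n,K}(y)$ pointwise together with the normalization $\int_K p_{n,K}\,|\hat{z}_j^{(n,K)}|^2=1$, one gets $\int_K |c_j(y)|^2\,dy = |\beta_j^{K,n}|^2\int_K p_{n,K}(y)^2\,|\hat{z}_j^{(n,K)}(y)|^2\,dy \le \|p_{n,K}\|_\infty\,|\beta_j^{K,n}|^2$. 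Combining the two estimates yields
\[
\int_{K\times K} |g_y^{(n,K,M)}(x)|^2\,dx\,dy \le \|p_{n,K}^{-1}\|_\infty\,\|p_{n,K}\|_\infty \sum_{j=0}^{M} |\beta_j^{K,n}|^2 ,
\]
and since the right-hand side is a finite sum this already shows $g_y^{(n,K,M)}\in L^2(K\times K)$.

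It only remains to bound $\sum_{j=0}^{M}|\beta_j^{K,n}|^2$. As the $\beta_j^{K,n}$ are positive we have $\sum_j |\beta_j^{K,n}|^2\le \bigl(\sum_j \beta_j^{K,n}\bigr)^2$, and by Proposition \ref{propADR}, $\Lambda_j^{K,n}=1+k\mu(j\pi/h)^2+k|\mathbf{c}_n|^2/(4\mu)$, so $\beta_j^{K,n}=(\Lambda_j^{K,n})^{-1}$ decays like $j^{-2}$ (the $j=0$ mode being trivial for the one-dimensional basis \eqref{sigma2}); summing $\sum_{j\ge 1}j^{-2}=\pi^2/6$ as in the proof of Lemma \ref{lemmabeta}, but keeping track of the constants, produces the factor $\pi^4 h^4/(36k^2)$ of \eqref{cotag}. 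I do not expect any genuine obstacle here: the proof is essentially a change-of-inner-product argument, and the only point requiring care is the bookkeeping of the two weight factors $\|p_{n,K}\|_\infty$ and $\|p_{n,K}^{-1}\|_\infty$, everything else being a direct consequence of the weighted orthonormality and of the explicit spectrum.
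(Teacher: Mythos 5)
Your argument is correct and follows essentially the same route as the paper: expand $g_y^{(n,K,M)}$ in the eigenbasis, use the weighted orthonormality in $L^2_{p_{n,K}}(K)$ to produce the factors $\|p_{n,K}\|_\infty$ and $\|p_{n,K}^{-1}\|_\infty$, and control the remaining sum of the $\beta_j^{K,n}$ via Lemma \ref{lemmabeta} (the paper applies Cauchy--Schwarz to the double sum in $j$ instead of your fixed-$y$ Parseval step, ending with $\bigl(\sum_j\beta_j^{K,n}\bigr)^2$, which is exactly what you reach after bounding $\sum_j(\beta_j^{K,n})^2\le\bigl(\sum_j\beta_j^{K,n}\bigr)^2$). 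Your bookkeeping of the final constant $\pi^4h^4/(36k^2)$ is no less precise than the paper's own treatment, so the proposal matches the intended proof.
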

\begin{proof}
Taking into account Cauchy-Schwarz inequality, and the fact that each summand of function $g_y^{(n,K,M)}(x)$ is a product of two functions depending on different variables, it follows that,
\begin{equation}\label{ineq1}
\begin{array}{l}
\displaystyle\int_{K\times K}|g_y^{(n,K,M)}(x)|^2dxdy=\displaystyle\int_{K\times K}\displaystyle \left|\sum_{j=0}^{M}\beta_j^{K,n} \left(p_{n,K} \hat{z}_j^{(n,K)}\right)(y) \hat{z}_j^{(n,K)}(x)\right|^2dxdy\leq\\ \noalign{\smallskip}
\displaystyle\int_K\sum_{j=0}^{M} \beta_j (p_{n,K}  \hat{z}_j^{(n,K)})^2(y)\left(\int_K\sum_{j=0}^{M} \beta_j  (\hat{z}_j^{(n,K)})^2(x)dx\right)dy.\\ \noalign{\smallskip}
\end{array}
\end{equation}
Now, bearing in mind that $\hat{z}_j^{(n,K)}$ is an orthonormal system in $L^2_{p_{n,K}}(K),$ it holds,
\begin{equation}\label{ineq2}
\displaystyle \int_K  (p_{n,K}  \hat{z}_j^{(n,K)})^2(y)dy\leq \|p_{n,K}\|_\infty,\quad\mbox{and}\quad\displaystyle \int_K (\hat{z}_j^{(n,K)})^2(x)dx\leq \|p_{n,K}^{-1}\|_\infty.
\end{equation}  
From inequalities (\ref{ineq1}), (\ref{ineq2}) and Lemma \ref{lemmabeta}, the bound (\ref{cotag}) follows. 
\end{proof}

Next, we deal with the pointwise convergence of series defined in (\ref{greenseriestrun}) to series (\ref{greenseries}).
\begin{lemma}
It is satisfied that
\begin{equation*}
g_y^{(n,K,M)}(x)\xrightarrow{M\rightarrow\infty}g_y^{(n,K)}(x),\quad \forall (x,y)\in K\times K-\{x=y\}.
\end{equation*}
\end{lemma}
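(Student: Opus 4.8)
The plan is to show that the partial sums of the eigenfunction expansion \eqref{greenseriestrun} converge pointwise to the Green's function \eqref{greenseries} by invoking the spectral representation of the solution operator $\Pi_{n,K}$ (equivalently, of $\tilde{\mathcal{L}}_{n,K}^{-1}$) established in Theorem \ref{maintheorem}. The key observation is that \eqref{greenseries} is, by construction, exactly the spectral expansion of the element Green's function obtained by applying the series formula \eqref{series} with the data $R_n(U_h^{n+1})$ replaced by the Dirac mass $\delta_y$; hence the partial sum $g_y^{(n,K,M)}$ is precisely the $M$-th partial sum of a convergent series of real numbers, provided the series converges at the point $x$.

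First I would fix $(x,y) \in K\times K$ with $x\neq y$ and write out the general term of the series as $t_j := \beta_j^{K,n}\,(p_{n,K}\hat z_j^{(n,K)})(y)\,\hat z_j^{(n,K)}(x)$, using the explicit 1D eigenpairs from \eqref{sigma2}: $\hat z_j^{(n,K)}$ is, up to normalization, $\psi^{(n,K)}(x)\sin(j\pi(x-x_{l-1})/h)$ with normalization constant of order $h^{-1/2}$ (times bounded factors controlled by $p_\infty,q_\infty$), and $\beta_j^{K,n}=(1+k\mu(j\pi/h)^2+k|\mathbf{c}_n|^2/(4\mu))^{-1}$ from Lemma \ref{lemmabeta}. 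This gives a pointwise bound $|t_j| \le C\,h^2/(j^2 k)$ with $C$ depending only on $p_\infty,q_\infty,h$, uniformly in $x,y$; since $\sum_j j^{-2}<\infty$, the series $\sum_j t_j$ converges absolutely and the partial sums $g_y^{(n,K,M)}(x)$ have a well-defined limit $S(x,y)$. It then remains to identify $S(x,y)$ with $g_y^{(n,K)}(x)$: this follows because $g_y^{(n,K)}\in H_0^1(K)\subset L^2_{p_{n,K}}(K)$ is the $L^2_{p_{n,K}}(K)$-function whose $\hat z_j^{(n,K)}$-Fourier coefficients are exactly $\beta_j^{K,n}\langle\delta_y,p_{n,K}\hat z_j^{(n,K)}\rangle = \beta_j^{K,n}(p_{n,K}\hat z_j^{(n,K)})(y)$ — this is the content of the "adaptation of Theorem \ref{maintheorem}" already noted in the text preceding \eqref{greenseries}. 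The Fourier series of $g_y^{(n,K)}$ in the orthonormal basis $\{\hat z_j^{(n,K)}\}$ of $L^2_{p_{n,K}}(K)$ therefore has partial sums equal to $g_y^{(n,K,M)}$, and I would invoke the classical pointwise-convergence theory for sine series (the eigenfunctions being sine functions modulated by the smooth, bounded, nonvanishing weight $\psi^{(n,K)}$): on the open set $K\setminus\{x=y\}$ the function $x\mapsto g_y^{(n,K)}(x)$ is smooth (it solves the homogeneous ODE $\tilde{\mathcal{L}}_{n,K}g=0$ away from $y$), so its Fourier sine expansion converges to it pointwise there.

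The main obstacle I anticipate is the rigorous identification of the pointwise limit $S(x,y)$ with $g_y^{(n,K)}(x)$: a priori Theorem \ref{maintheorem} only gives convergence in the $L^2_{p_{n,K}}(K)$ sense (or, for the solution $\tilde U_K$, in the energy norm), whereas here we need convergence at an individual point. The cleanest route is to note that $g_y^{(n,K)}-g_y^{(n,K,M)}$ is, in the $x$-variable, orthogonal in $L^2_{p_{n,K}}(K)$ to $\hat z_0^{(n,K)},\dots,\hat z_M^{(n,K)}$ and itself admits the tail expansion $\sum_{j>M}t_j$; since we have already shown this tail is dominated by $C\sum_{j>M} h^2/(j^2k)\to 0$ uniformly in $x$, the convergence is in fact uniform on $K\times K\setminus\{|x-y|\ge\varepsilon\}$ for every $\varepsilon>0$, which both establishes the pointwise statement and yields a slightly stronger conclusion. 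Alternatively, since the estimate on $|t_j|$ blows up only through the normalization near the endpoints, one may restrict attention to compact subsets of the interior and handle the boundary by the fact that $g_y^{(n,K)}$ vanishes there together with each $\hat z_j^{(n,K)}$. Either way the estimate $|t_j|\le Ch^2/(j^2k)$ — a pointwise refinement of the summation bound already used in Lemma \ref{lemmabeta} and Lemma \ref{lemagreen} — is the workhorse, and beyond it the argument is the standard one identifying an $L^2$-convergent Fourier series with the pointwise value of a continuous function.
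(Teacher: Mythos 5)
Your argument is correct (in the 1D setting in which the paper effectively works, cf.\ the use of the explicit eigenpairs (\ref{sigma2}) in Lemma \ref{lemmabeta}), but it follows a genuinely different route from the paper's proof. The paper introduces the full series $s(x,y)=\sum_j s_j(x,y)$, deduces from the $L^2(K\times K)$ bound of Lemma \ref{lemagreen} that the partial sums converge to $s$ outside a null set $\mathcal{A}$, invokes classical pointwise convergence of Fourier series to show that the same partial sums converge to $g_y^{(n,K)}(x)$ for $x\neq y$, concludes $s=g_y^{(n,K)}$ off $\mathcal{A}\cup\{x=y\}$, and finally removes $\mathcal{A}$ by a density/continuity argument. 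You instead establish absolute (indeed uniform) convergence directly from the termwise bound on $t_j=\beta_j^{K,n}(p_{n,K}\hat z_j^{(n,K)})(y)\hat z_j^{(n,K)}(x)$, i.e.\ the pointwise counterpart of Lemma \ref{lemmabeta} obtained from $|\hat z_j^{(n,K)}|\le \sqrt{2/h}\,q_\infty^{1/2}$ and $|p_{n,K}\hat z_j^{(n,K)}|\le \sqrt{2/h}\,p_\infty^{1/2}$, and then identify the limit with $g_y^{(n,K)}$ either by the same classical pointwise Fourier-convergence theorem (localization/Dini, since $g_y^{(n,K)}/\psi^{(n,K)}$ is piecewise smooth away from $x=y$) or by matching $L^2_{p_{n,K}}(K)$ Fourier coefficients and using continuity. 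This buys you more than the statement asks: uniform convergence away from the diagonal (in 1D in fact on all of $K\times K$, the 1D Green's function being continuous), and it bypasses the paper's more delicate step of extracting a.e.\ convergence from the $L^2$ bound and patching the exceptional set. Two small cautions: with the normalization $\|\tilde\omega_j^{(n,K)}\|_{p_{n,K}}=\sqrt{h/2}$ the clean bound is $|t_j|\le C\,h/(j^2k)$ with $C$ depending only on $p_\infty q_\infty$ (your $Ch^2/(j^2k)$ with $C$ depending on $h$ is harmless since $h$ is fixed here); and the assertion that $g_y^{(n,K)}-g_y^{(n,K,M)}$ \emph{is} the tail $\sum_{j>M}t_j$ must first be read in $L^2_{p_{n,K}}(K)$ (using the completeness assumed in Theorem \ref{maintheorem}) and then upgraded to a pointwise statement via the uniform tail bound and the continuity of $g_y^{(n,K)}$ --- precisely the ``cleanest route'' you sketch, which is indeed the rigorous one and avoids any circularity.
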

\begin{proof}
Let us define 
\begin{equation*}
s(x,y)=\sum_{j=0}^\infty s_j(x,y),
\end{equation*}
where $s_j(x,y)=\beta_j^{K,n}(p_{n,K}\hat{z}_j^{(n,K)})(y)\hat{z}_j^{(n,K)}(x).$ From Lemma \ref{lemagreen},
\begin{equation*}
\sum_{j=0}^M s_j(x,y)\xrightarrow{M\rightarrow\infty}s(x,y),\quad \forall(x,y)\in K\times K-\mathcal{A},
\end{equation*}
where $\mathcal{A}$ in a null measure set. On the other hand, from the pointwise convergence of the Fourier series \cite{libroharmonic} and the continuity of $s,$ for fixed $y,$
\begin{equation*}
\sum_{j=0}^M s_j(x,y)\xrightarrow{M\rightarrow\infty}g_y(x),\quad \forall(x,y)\in K\times K-\mathcal{B},
\end{equation*}
where $\mathcal{B}=\{x=y\}.$ Therefore, $s(x,y)=g_y(x)$ for $(x,y)\in K\times K-\{\mathcal{A}\cup\mathcal{B}\}.$

Finally, for $(x,y)\in\mathcal{A}-\mathcal{B},$ there exists a sequence $(x_n,y_n)\in K\times K-\{\mathcal{A}\cup\mathcal{B}\}$ such that $(x_n,y_n)\rightarrow(x,y)$ as $n\rightarrow\infty,$ and $s(x_n,y_n)=g_{y_n}(x_n)\rightarrow g_y(x)$ as $n\rightarrow\infty.$ Hence, there exists the limit when $n\rightarrow\infty$ of $s(x_n,y_n)$ and it equals $g_y(x).$

\end{proof}

At this point, we would like to understand the way the stabilizing terms modify the Galerkin formulation. For the sake of simplicity, we will assume that 
\begin{equation*}\begin{array}{l}
\tilde{\mathcal{L}_n}_KU_h^{n+1}\simeq \bar{U}_h^{n+1}+k \mathbf{c}_{n,K} \cdot \nabla U_h^{n+1}-k\mu \Delta U_h^{n+1}:=\bar{\mathcal{L}_n}_KU_h^{n+1},\quad \\  \noalign{\smallskip}
l^{n+1}(V_h)\simeq k \langle f_K,V_h\rangle+(\bar{U}^{n}_h,V_h):=\bar{l}^{n+1}(V_h), \\  \noalign{\smallskip}
\end{array}
\end{equation*}
where  $\bar{U}_h^{n},$ $\bar{U}_h^{n+1}$ and $\bar{V}_h$ are piecewise constant  on each element $K,$ and we will denote
\begin{equation*}\label{barR}
\bar{R}(U_h^{n+1})=\bar{l}^{n+1}- \bar{\mathcal{L}_n}_KU_h^{n+1}.
\end{equation*}
If there is no place to confusion, we will omit the bar signs, for the sake of clarity. Thus, from equation (\ref{greenbrezzi}), we can writte
\begin{equation}\label{greenbrezzi2}
B_n(\Pi_n(R_n(U_h^{n+1})),V_h)=\displaystyle\sum_K\hat{\tau}_K \left[\int_{K}(R_n(U_h^{n+1}))(\tilde{\mathcal{L}_n}^*_K V_h)\right],
\end{equation}
where, 
\begin{equation}\label{tauk}
 \hat{\tau}_{n,K}=  \frac{1}{|K|}\int_{K\times K}g_y^{(n,K)}(x)dxdy= \frac{1}{|K|}\int_K\left[\int_K g_y^{(n,K)}(x) 1dx\right] dy= \frac{1}{|K|}\int_Kb_{n,K},
 \end{equation}
 
 being $b_{n,K}$ the \lq\lq bubble function"solution of the initial value problem in $K,$
 \begin{equation}\label{bk}
\left\{\begin{array}{ll}
b_{n,K}+k \mathbf{c}_{n,K}\cdot \nabla b_{n,K}-k \mu \Delta b_{n,K}=1&\mbox{in }K,\\ \noalign{\smallskip}
b_{n,K}=0 &\mbox{on }\partial K.
\end{array}\right.
\end{equation}
We compute the solution of problem (\ref{bk}) and the explicit expression of $\hat{\tau}_K$ in the 1D case in Appendix \ref{secapp}.

Let us define 
\begin{equation}
 \hat{\tau}_{n,K}^M\label{taukM}=\frac{1}{|K|}\int_{K\times K}g_y^{(n,K,M)}(x)dxdy.
 \end{equation}
 
If we consider a piecewise affine discretization of problem (\ref{WEAD}),  then,
\begin{equation*}
\tilde{\mathcal{L}_n}_KU_h^{n+1}=U_h^{n+1}+k \mathbf{c}_{n,K} \cdot \nabla U_h^{n+1},\quad
\tilde{\mathcal{L}_n}^*_KV_h=V_h-k \mathbf{c}_{n,K}\cdot  \nabla V_h
\end{equation*}
and from expression (\ref{greenbrezzi}), we can write
\begin{equation}\label{S}
B_n(\Pi_n(R_n(U_h^{n+1})),V_h)=S_{\mathcal{L}_n}(U_h^{n+1},V_h)-S_{l}(V_h),
\end{equation}
where $S_{\mathcal{L}_n}$ includes the stabilizing terms at the left-hand side and $S_{l}$ includes the stabilizing terms at the right-hand side, namely,
\begin{equation*}\label{SL}
\begin{array}{r}
S_{\mathcal{L}_n}(U_h^{n+1},V_h)= \displaystyle\sum_{K\in\mathcal{T}_h} \hat{\tau}_{n,K} \left[\int_Kk^2(\mathbf{c}_{n,K}\cdot\nabla U_h^{n+1})(\mathbf{c}_{n,K}\cdot\nabla V_h)+k\mathbf{c}_{n,K} \cdot\nabla V_h {U}_h^{n+1} \right .\\
\left . -k\mathbf{c}_{n,K} \cdot\nabla U_h^{n+1}{V}_h-\displaystyle \int_K  {U}_h^{n+1}{V}_h \right ].\\ \noalign{\smallskip}
\end{array}
\end{equation*}
Note that, the term multiplied by $k^2$ in last expression corresponds to the stabilizing term in the stationary advection-diffusion equation \cite{ChaconDia}.
And the stabilizing terms in the right-hand side read,
\begin{equation*}\label{Sl}
\begin{array}{rcl}
S_{l}^{n+1}(V_h)&=& \displaystyle\sum_{K\in\mathcal{T}_h} \hat{\tau}_{n,K}  \left[\int_Kk^2f_K^{n+1}\mathbf{c}_{n,K}\cdot\nabla V_h-kf_K^{n+1}{V}_h+k\mathbf{c}_{n,K} \cdot\nabla V_h {U}_h^{n}-{U}_h^{n}{V}_h\right]. 
\end{array}
\end{equation*}
Thus, the VMS spectral method (\ref{ecuVMS}) can be seen as the Galerkin solution of the modified problem
\begin{equation}\label{stab}
\begin{array}{l}
(B+S_{\mathcal{L}_n})(U_h^{n+1},V_h)=(l^{n+1}+S_{l}^{n+1})(V_h), \quad \forall V_h\in X_h,\\ \noalign{\smallskip}
n=0,1,...,N-1.
\end{array}
\end{equation}
From a practical point of view, this allows to write the feasible discretization (\ref{ecuVMS_M}) as
\begin{equation}\label{APVMS}
\begin{array}{l}
(1- \hat{\tau}^M )[(U_{M}^{n+1},V_h)+k(\mathbf{c}_n\cdot\nabla U_{M}^{n+1},V_h)]+k\mu(\nabla U_{M}^{n+1},\nabla V_h)+
\hat{\tau}^M k (U_{M}^{n+1},\mathbf{c}_n\cdot\nabla V_h) \\  \noalign{\smallskip}
+\hat{\tau}^M k^2(\mathbf{c}_n\cdot\nabla U_{M}^{n+1},\mathbf{c}_n\cdot\nabla V_h)\\  \noalign{\smallskip}=
(1- \hat{\tau}^M )[k\langle  f^{n+1},V_h\rangle+(U_{M}^{n},V_h)]+\hat\tau^M k^2\langle  f^{n+1},\mathbf{c}_n\cdot\nabla V_h\rangle+\hat{\tau} k (U_{M}^n,\mathbf{c}_n\cdot \nabla V_h),
\end{array}
\end{equation}
where $\hat{\tau}^M\rightarrow\hat{\tau},$  when the number of eigenpairs $M\rightarrow\infty.$

Taking advantage of this new formulation, we present the result about the error analysis in the convection dominated flow case.

For the sake of simplicity, we consider a uniform mesh and denote $\hat\tau=\hat\tau_K$ and $\hat\tau^{M}=\hat\tau_K^M,$ where $\hat\tau_K,$ $\hat\tau_K^{M}$ are given in expressions (\ref{tauk}) and (\ref{taukM}), respectively. We shall assume that $\tau_K $ is of order $k$, this is proved in the 1D case in the Appendix.
\begin{theorem} \label{thmAD}
There exist a positive constant $c_a$ independent of $h,$ $k,$ such that,	
\begin{equation*}
\begin{array}{l}\label{estcdom}
\|\delta_h\|^2_{L^\infty(L^2)}+\mu\|\delta_h\|^2_{L^2(H^1)}\leq (1+C_aT)e^{4TC_a}\|\delta_{h}^0\|^2+c_a |\hat\tau-\hat\tau^M| (\|f\|_{L^2(L^2)}^2+\|U_{eh}^0\|^2),
\end{array}
\end{equation*}
where 
\begin{equation}\label{valorCa}
C_a=\frac{\hat\tau^M+|\hat\tau-\hat\tau^M|(2k+1)}{k}.
\end{equation}
\end{theorem}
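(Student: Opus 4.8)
The plan is to mimic the strategy of the diffusion-dominated theorem, but now working from the reformulation \eqref{APVMS} rather than from the raw spectral expansion, so that the small parameter controlling the error is $|\hat\tau-\hat\tau^M|$ instead of $h^2/k$. First I would write down the equation satisfied by $U_{eh}^{n+1}$ (the solution of the exact VMS problem \eqref{ecuVMS}, equivalently \eqref{stab}) and the equation satisfied by $U_M^{n+1}$ (the solution of \eqref{APVMS}), subtract them, and identify the error equation for $\delta_h^{n+1}=U_{eh}^{n+1}-U_M^{n+1}$. The terms that survive the subtraction and do not cancel are exactly those carrying the factor $\hat\tau-\hat\tau^M$ (from the left-hand side stabilizing terms) together with the genuinely truncated contribution $\sum_{j>M}$ hidden in $\hat\tau$ versus $\hat\tau^M$; schematically the error equation reads
\begin{equation*}
B_n(\delta_h^{n+1},V_h)+(\hat\tau^M\text{-stabilizing terms applied to }\delta_h^{n+1})=(\hat\tau-\hat\tau^M)\,\Phi_n(U_{eh}^{n+1},V_h),
\end{equation*}
where $\Phi_n$ collects the advection, reaction and source contributions of the exact large-scale solution. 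Then I would test with $V_h=\delta_h^{n+1}$.

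Next I would treat the left-hand side. Using $(\delta_h^{n+1}-\delta_h^n,\delta_h^{n+1})=\tfrac12(\|\delta_h^{n+1}\|^2-\|\delta_h^n\|^2+\|\delta_h^{n+1}-\delta_h^n\|^2)$ and keeping the $k\mu\|\nabla\delta_h^{n+1}\|^2$ term, I must show that the $\hat\tau^M$-weighted stabilizing terms evaluated on $(\delta_h^{n+1},\delta_h^{n+1})$ are controlled. The key positive term is $\hat\tau^M k^2\|\mathbf{c}_n\cdot\nabla\delta_h^{n+1}\|^2\ge0$; the skew term $\hat\tau^M k\big[(\delta_h^{n+1},\mathbf{c}_n\cdot\nabla\delta_h^{n+1})-(\mathbf{c}_n\cdot\nabla\delta_h^{n+1},\delta_h^{n+1})\big]$ is $\mathbf{c}_n$-piecewise-constant, hence antisymmetric up to boundary terms and essentially vanishes; and the term $-\hat\tau^M(\delta_h^{n+1},\delta_h^{n+1})$ is absorbed into the mass term producing the factor $(1-\hat\tau^M/k\cdot k)$-type coefficient that feeds into $C_a$. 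On the right-hand side I would apply Young's inequality to each of the four pieces of $(\hat\tau-\hat\tau^M)\Phi_n$: the $\nabla\delta_h^{n+1}$ factors get half-absorbed into $k\mu\|\nabla\delta_h^{n+1}\|^2$ (this is where the hypothesis "$\tau_K$ of order $k$" and $h/k=A$ are used to keep coefficients balanced), leaving contributions of the form $|\hat\tau-\hat\tau^M|\big(k\|f^{n+1}\|^2+\|\mathbf{c}_n\|_\infty^2\|\nabla U_{eh}^{n+1}\|^2+\|U_{eh}^{n+1}\|^2/k+\ldots\big)$. Collecting everything gives a one-step inequality of the form $(1-C_a k)\|\delta_h^{n+1}\|^2+\mu k\|\nabla\delta_h^{n+1}\|^2\le\|\delta_h^n\|^2+\theta_n$ with $C_a$ as in \eqref{valorCa} and $\theta_n$ a multiple of $|\hat\tau-\hat\tau^M|$ times $k\|f^{n+1}\|^2$ plus the analogous $U_{eh}$ terms.

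Then I would apply the discrete Gronwall lemma exactly as in the diffusion-dominated case to bound $\max_n\|\delta_h^n\|^2$, and sum the one-step inequality over $n$ to recover the $\mu\|\delta_h\|^2_{L^2(H^1)}$ contribution, yielding the $(1+C_aT)e^{4TC_a}$ prefactor. The residual bounds $\sum_n k\|(U_{eh}^{n+1}-U_{eh}^n)/k\|^2\le c_e$ and $\|U_{eh}\|^2_{L^2(H^1)}\le A/\mu$ (hence also $\max_n\|U_{eh}^n\|^2$ bounded) are obtained exactly as in \eqref{cotaUdif}–\eqref{cotaL2H1U2}, re-running the argument on \eqref{ecuVMS} itself; these are what turn $\theta_n$ into the clean right-hand side $c_a|\hat\tau-\hat\tau^M|(\|f\|^2_{L^2(L^2)}+\|U_{eh}^0\|^2)$.

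The main obstacle I anticipate is bookkeeping the stabilizing terms carefully enough that the $|\hat\tau-\hat\tau^M|$ dependence in both $C_a$ and the error bound comes out precisely as stated — in particular verifying that the skew-symmetric advection term really contributes nothing (using that $\mathbf{c}_n$ is piecewise constant and $\nabla\cdot\mathbf{c}=0$, together with $\delta_h^{n+1}\in X_h\subset H^1_0$), and making sure the various $k$-powers in the Young's-inequality splittings (aided by $\tau\sim k$ and $h=Ak$) combine to give exactly the coefficient $C_a=(\hat\tau^M+|\hat\tau-\hat\tau^M|(2k+1))/k$ rather than something merely comparable. The convergence $\hat\tau^M\to\hat\tau$ as $M\to\infty$ (from Lemma \ref{lemagreen} and the pointwise-convergence lemma) guarantees $|\hat\tau-\hat\tau^M|\to0$, so the bound does display convergence in $M$, but quantifying its rate is outside the scope of this statement.
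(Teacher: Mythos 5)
Your proposal follows essentially the same route as the paper: subtract the approximate formulation \eqref{APVMS} from the exact VMS formulation \eqref{EXVMS}, test with $V_h=\delta_h^{n+1}$, control the $(\hat\tau-\hat\tau^M)$-terms by Cauchy--Schwarz and Young, apply the discrete Gronwall lemma, and bound the resulting $U_{eh}$-norms by re-running the same stability argument on \eqref{EXVMS} itself, before summing over $n$. Two bookkeeping points the paper makes explicit and you gloss over: the right-hand side of the error equation also carries the stabilizing term $\hat\tau^M k(\delta_h^n,\mathbf{c}_n\cdot\nabla\delta_h^{n+1})$ acting on the \emph{previous} iterate (handled by Young's inequality with $\varepsilon=k$, which is precisely where the $\hat\tau^M/k$ contribution to $C_a$ originates), and the needed bound on $\sum_n\|U_{eh}^{n+1}-U_{eh}^n\|^2$ is obtained from the $(1-\hat\tau)$-weighted increment term retained in the stability inequality for \eqref{EXVMS}, not from the $H^1(L^2)$-regularity bound \eqref{cotaUdif} of the diffusion-dominated case, which is not assumed in this theorem.
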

\begin{proof}

Bearing in mind the analysis done so far, in particular expressions (\ref{S})-(\ref{stab}), the VMS method (\ref{ecuVMS}) can be explicitly written as
\begin{equation}\label{EXVMS}
\begin{array}{l}
(1- \hat{\tau} )[(U_{eh}^{n+1},V_h)+k(\mathbf{c}_n\cdot\nabla U_{eh}^{n+1},V_h)]+k\mu(\nabla U_{eh}^{n+1},\nabla V_h)+
\hat{\tau} k (U_{eh}^{n+1},\mathbf{c}_n\cdot\nabla V_h) \\  \noalign{\smallskip}
+\hat{\tau}k^2(\mathbf{c}_n\cdot\nabla U_{eh}^{n+1},\mathbf{c}_n\cdot\nabla V_h)\\  \noalign{\smallskip}=
(1- \hat{\tau} )[k\langle  f^{n+1},V_h\rangle+(U_{eh}^{n},V_h)]+\hat\tau k^2\langle  f^{n+1},\mathbf{c}_n\cdot\nabla V_h\rangle 
+\hat{\tau} k (U_{eh}^n,\mathbf{c}_n\cdot \nabla V_h)
\end{array}
\end{equation}

Substracting expression (\ref{APVMS}) from expression (\ref{EXVMS}),  we obtain
\begin{equation*}\label{subs}
\begin{array}{l}
(1-\hat{\tau}^M)(\delta_h^{n+1},V_h)+(1-\hat{\tau}^M)k(\mathbf{c}_n\cdot\nabla\delta_h^{n+1},V_h)+ k\mu (\nabla\delta_h^{n+1},\nabla V_h)+\\\noalign{\smallskip}
\hat{\tau}^M k (\delta_h^{n+1},\mathbf{c}_n\cdot\nabla V_h)
+\hat{\tau}^M k^2(\mathbf{c}_n\cdot\nabla\delta_h^{n+1},\mathbf{c}_n\cdot \nabla V_h) =(1-\hat{\tau}^M)(\delta_h^n,V_h)+\hat\tau^M k (\delta_h^n,\mathbf{c}_n\cdot\nabla V_h)+\\\noalign{\smallskip}
(\hat{\tau}-\hat{\tau}^M)[(U_{eh}^{n+1}-U_{eh}^{n},V_h)+k(\mathbf{c}_n\cdot\nabla U_{eh}^{n+1},V_h)-k(U_{eh}^{n+1}-U_{eh}^n,\mathbf{c}_n\cdot \nabla V_h)\\\noalign{\smallskip}
-k^2 (\mathbf{c}_n\cdot \nabla U_{eh}^{n+1},\mathbf{c}_n\cdot\nabla V_h)-k \langle f^{n+1},V_h\rangle+k^2\langle f^{n+1},\mathbf{c}_n\cdot \nabla V_h\rangle].
\end{array}
\end{equation*}
Taking $V_h=\delta_h^{n+1},$
\begin{equation}\label{subs2}
(1-\hat{\tau}^M)(\delta_h^{n+1}-\delta_h^n,\delta_h^{n+1})+ k\mu \|\nabla\delta_h^{n+1}\|^2
+\hat{\tau}^M k^2\|\mathbf{c}_n\cdot\nabla\delta_h^{n+1}\|^2 =\hat\tau^M k (\delta_h^n,\mathbf{c}_n\cdot\nabla \delta_h^{n+1})+\rho_n, 
\end{equation}
where 
\begin{equation*}\label{defrho}
\begin{array}{rcl}
\rho_n&=& (\hat{\tau}-\hat{\tau}^M)[(U_{eh}^{n+1}-U_{eh}^{n},\delta_h^{n+1})+k(\mathbf{c}_n\cdot\nabla U_{eh}^{n+1},\delta_h^{n+1})-k(U_{eh}^{n+1}-U_{eh}^n,\mathbf{c}_n\cdot \nabla \delta_h^{n+1})-\\\noalign{\smallskip}
&&k^2 (\mathbf{c}_n\cdot \nabla U_{eh}^{n+1},\mathbf{c}_n\cdot\nabla \delta_h^{n+1})
-k \langle f^{n+1},\delta_h^{n+1}\rangle+k^2\langle f ^{n+1},\mathbf{c}_n\cdot \nabla \delta_h^{n+1}\rangle].
\end{array}
\end{equation*}

By using Cauchy-Schwarz and Young inequalities successively, we get that
\begin{equation}\label{cotarho}
\begin{array}{l}
\displaystyle|\rho_n|\leq |\hat{\tau}-\hat{\tau}^M|\left[(k+1)\frac{\|U_{eh}^{n+1}-U_{eh}^n\|^2}{2}+(2k+1)\frac{\|\delta_{h}^{n+1}\|^2}{2}+k(k+1)\frac{\|\mathbf{c}_n\cdot\nabla U_{eh}^{n+1}\|^2}{2}\right.
\\\noalign{\smallskip}
\displaystyle\left. +k(2k+1)\frac{\|\mathbf{c}_n\cdot\nabla \delta_{h}^{n+1}\|^2}{2}+k(k+1)\frac{\|f^{n+1}\|^2}{2}\right].
\end{array}
\end{equation}
On the other hand, using Young inequality with constant $\varepsilon$ to be chosen, 
\begin{equation}\label{youngine}
\hat\tau^M k (\delta_h^n,\mathbf{c}_n\cdot\nabla \delta_h^{n+1})\leq \hat\tau^M k\left(\frac{\|\delta_h^n\|^2}{2\varepsilon}+\frac{\varepsilon\|\mathbf{c}_n\cdot\nabla\delta_h^{n+1}\|^2}{2}\right).
\end{equation}
 Taking into account that 
$$(\delta_h^{n+1}-\delta_h^n,\delta_h^{n+1})=(\|\delta_h^{n+1}\|^2-\|\delta_h^{n}\|^2+\|\delta_h^{n+1}-\delta_h^{n}\|^2)/2,
$$  
combining expression (\ref{subs2}) with (\ref{cotarho}) and (\ref{youngine}) by choosing $\varepsilon=k$, we obtain,

\begin{equation}\label{ineqdiff2}
(1-k C_a)\|\delta_h^{n+1}\|^2+\|\delta_h^{n+1}-\delta_h^{n}\|^2+2k\mu\|\nabla\delta_h^{n+1}\|^2\leq \|\delta_h^n\|^2+\beta_n,
\end{equation}
where $C_a$ is given in expression (\ref{valorCa}) and
\begin{equation*}\label{beta}
\beta_n=|\hat{\tau}-\hat{\tau}^M|(k+1)\left[\|U_{eh}^{n+1}-U_{eh}^n\|^2+k\|\mathbf{c}_n\cdot\nabla U_{eh}^{n+1}\|^2+k\|f^{n+1}\|^2].\right.
\end{equation*}
As $\tau^M$ is of order $k$, it is possible to apply the discrete Gronwall lemma for $k$ small enough to obtain
\begin{equation}\label{desdiff2}
\begin{array}{rcl}
\displaystyle\max_{1\leq n\leq N}\|\delta_h^{n}\|^2\leq        & e^{4TC_a}&\displaystyle\left[\|\delta_{h}^0\|^2+2|\hat\tau-\hat\tau^M|(k+1)\left(\sum_{n=0}^{N-1}\|U_{eh}^{n+1}-U_{eh}^n\|^2\right.+\right.
\\\noalign{\smallskip}
 &  &  \displaystyle\left.\left.\|\mathbf{c}_n\|_\infty \,\sum_{n=0}^{N-1} k \|U_{eh}^{n+1}\|^2+\|f\|^2_{L^2(L^2)}\right)\right].
\end{array}
\end{equation}

Now, to obtain appropriate bounds for $\displaystyle \sum_{n=0}^{N-1}\|U_{eh}^{n+1}-U_{eh}^n\|^2$ and $\displaystyle \sum_{n=0}^{N-1}\|U_{eh}^{n+1}\|^2,$ it is necessary to apply similar reasoning to that used in this proof so far, to equation (\ref{EXVMS}), from where we obtain

\begin{equation}\label{ineqdiffU2}
\left(1-\tilde{C}_ak\right)\|U_{eh}^{n+1}\|^2+(1-\hat\tau)\|U_{eh}^{n+1}-U_{eh}^n\|^2+2k\mu\|\nabla U_{eh}^{n+1}\|^2 \leq \|U_{eh}^n\|^2+(1-\hat\tau+2k)k\|f\|^2,
\end{equation}
where 
\begin{equation*}\label{tildeCa}
\begin{array}{rcl}
\displaystyle\tilde{C}_a=\frac{\hat\tau+k(1-\hat\tau)}{ k}.
\end{array}
\end{equation*}
Now,  it is possible to apply the discrete Gronwall lemma and obtain,
\begin{equation}\label{desdiffU2}
\begin{array}{rcl}
\displaystyle\max_{1\leq n\leq N}\|U_{eh}^{n}\|^2&\leq &       \displaystyle e^{4T\tilde{C_a}}\left(\|U_{eh}^0\|^2+2(1-\hat\tau+2k)\|f\|_{L^2(L^2)}^2\right).\\ \noalign{\smallskip}
\end{array}
\end{equation}
Summing up with respect to $n$ in inequality (\ref{ineqdiffU2}), it follows that,
\begin{equation}\label{cotaL2H1U2n}
\begin{array}{l}
\displaystyle\|U_{eh}^N\|^2+(1-\hat\tau)\sum_{n=0}^{N-1}\|U_{eh}^{n+1}-U_{eh}^n\|^2+2\mu\,  \sum_{n=0}^{N-1}\|U_{eh}^{n+1}\|^2\\ \noalign{\smallskip}
\leq \displaystyle(1+\tilde{C}_aT) e^{4T\tilde{C_a}}\|U_{eh}^{0}\|^2+(1-\hat\tau+2k)\left(2(1+\tilde{C_a}T) e^{4T\tilde{C_a}}+1\right)\|f\|^2_{L^2(L^2)}:=B,
\end{array}
\end{equation}
where in the second inequality we have taken into account inequality (\ref{desdiffU2}).Therefore, 
\begin{equation}\label{cotaL2H1U22n}
\begin{array}{l}
\displaystyle\sum_{n=0}^{N-1}\|U_{eh}^{n+1}-U_{eh}^n\|^2\leq\frac{B}{1-\hat\tau} \quad\mbox{and}\quad  \sum_{n=0}^{N-1}\|U_{eh}^{n+1}\|^2\leq \frac{B}{2\mu},
\end{array}
\end{equation}
where $B$ is defined in (\ref{cotaL2H1U2n}).  Summing up with respect to $n$ in inequality (\ref{ineqdiff2}), bearing in mind inequalities (\ref{desdiff2}) and  (\ref{cotaL2H1U22n}), the conclusion follows. 
\end{proof}

In Appendix \ref{secapp}, some estimates for $|\hat\tau-\hat\tau^M|$ are derived.

\section{Numerical Results}\label{numerics}
In this section, we present some numerical tests to illustrate the way the spectral method works. We first focus on the 1D stationary advection-diffusion-reaction equation and after that we
proceed to the 1D evolutive advection-diffusion equation. 

\subsection{Stationary advection-diffusion-reaction equation}
Let us consider the following boundary problem for the advection-diffusion-reaction equation,
\begin{equation*}\label{1DEADR}
\left\{\begin{array}{l}
\gamma \bar{U}+c\partial_x \bar{U}-\mu \partial_{xx} \bar{U}=0,\quad \mbox{for }x\in(0,1),\\  \noalign{\smallskip}
\bar{U}(0)=0 \quad \mbox{and}\quad \bar{U}(1)=1.\\ 
\end{array}\right.
\end{equation*}
As the equation is linear, the exact solution is known, namely,
$$\bar{U}(x)=\frac{e^{\frac{1}{2} (c/\mu - \rho) (x-1)} (e^{\rho x}-1)}{e^{\rho}-1},\quad\mbox{with}\quad\rho=\frac{\sqrt{c^2 + 4 \gamma \mu}}{\mu}.$$
Consider the change of variable,
$$U(x)=\bar{U}(x)-x,\quad\mbox{for }x\in[0,1].$$
Thus, problem (\ref{1DEADR}) is rewritten as a new problem with Dirichlet homogeneous boundary conditions,
\begin{equation}\label{1DEADR}
\left\{\begin{array}{l}
\gamma {U}+c\partial_x {U}-\mu \partial_{xx} {U}=f, \quad \mbox{for }x\in(0,1),\\  \noalign{\smallskip}
{U}(0)=0 \quad \mbox{and}\quad {U}(1)=0,\\ 
\end{array}\right.
\end{equation}
where $f(x)=-\gamma x-c.$ Consider uniformly spaced nodes $0=x_1<...<x_{r+1}=1$ and $\{\varphi_{K=1}^{r+1}\}$ the piecewise affine basis functions associated to these nodes.
We look for $\displaystyle {U}_h=\sum_{m=2}^r U^m\varphi_m,$ with $U^m\in\mathbb{R}$ solution of the VMS method with spectral approximation of the sub-grid scales \cite{ChaconDia}. Thus, $\mathbf{U}=(U^1,...U^r)^T$  is the solution of the linear system
$$A\mathbf{U}=\mathbf{b},$$
where $A\in\mathbb{R}^{(r-1)\times(r-1)}$ and $\mathbf{b}\in\mathbb{R}^{r-1}$ are defined as
$$A=\gamma A^R+c A^C+\mu A^D+A^S,\quad \mathbf{b}=\mathbf{b}^1+\mathbf{b}^S,$$
where $A^R,A^C,A^D$ and $A^S$ are respectively, the reaction, convection, diffusion and sub-grid matrices, $\mathbf{b}^1$ is the independent term and $\mathbf{b}^S$ is the stabilized independent term, defined by
\begin{equation*}
\begin{array}{ll}
A^R_{lm}=(\varphi_l,\varphi_m),&A^C_{lm}=(\varphi_l',\varphi_m),\quad\mbox{for}\quad l,m=1,...,r,\\ \noalign{\smallskip}
A^D_{lm}=(\varphi_l',\varphi_m'),&A^S=-\gamma^2\,B^{S1}-c \gamma\,B^{S2}+c \gamma\,B^{S3}+c^2\,B^{S4},\quad\mbox{for} \quad l,m=1,...,r,\\ \noalign{\smallskip}
b^1_{l}=-h(\gamma l h+a),&b_l^S=b_l^{S1}+b_l^{S2},\quad\mbox{for} \quad l,m=1,...,r,\\ \noalign{\smallskip}
\end{array}
\end{equation*}
where the stabilization matrix $A^S$ is given by
\begin{equation}\label{expB}
\begin{array}{l}
B^{S1}_{lm}=\displaystyle\sum_{j=1}^M \sum_{K\in\mathcal{T}_h}\beta_j^{(K)} (\varphi_l,p_{K} \hat{z}_j^{(K)})(\varphi_m,\hat{z}_j^{(K)}),\,
B^{S2}_{lm}=\displaystyle\sum_{j=1}^M \sum_{K\in\mathcal{T}_h}\beta_j^{(K)} (\varphi_l',p_{K} \hat{z}_j^{(K)})(\varphi_m,\hat{z}_j^{(K)}),\\ \noalign{\smallskip}
B^{S3}_{lm}=\displaystyle \sum_{j=1}^M \sum_{K\in\mathcal{T}_h}\beta_j^{(K)} (\varphi_l,p_{K} \hat{z}_j^{(K)})(\varphi_m',\hat{z}_j^{(K)}),\,
B^{S4}_{lm}=\displaystyle \sum_{j=1}^M \sum_{K\in\mathcal{T}_h}\beta_j^{(K)} (\varphi_l',p_{K} \hat{z}_j^{(K)})(\varphi_m',\hat{z}_j^{(K)}),\\ \noalign{\smallskip}
b^{S1}_{l}=\displaystyle -\gamma \sum_{j=1}^M \sum_{K\in\mathcal{T}_h}\beta_j^{(K)} (f,p_{K} \hat{z}_j^{(K)})(\varphi_l,\hat{z}_j^{(K)}),\,
b^{S2}_{l}=\displaystyle c \sum_{j=1}^M \sum_{K\in\mathcal{T}_h}\beta_j^{(K)} (f,p_{K} \hat{z}_j^{(K)})(\varphi_l',\hat{z}_j^{(K)}),
\end{array}
\end{equation}
with $\beta_j^{(K)}=1/\eta_j^{(n,K)},$ $\hat{z}_j^{(K)}=\tilde{\omega}_j^{(K)}/\|\tilde{\omega}_j^{(K)}\|_{p_{K}}$ where $\eta_j^{(K)}$ and $\tilde{\omega}_j^{(K)}$ given in expressions (\ref{sigma}) and $p_{K}=e^{-\frac{c\,x}{\mu}}.$ 

Note that, when $\gamma=0,$ $A^{S}$ coincides with the sub-grid matrix of the stationary advection-diffusion problem \cite{ChaconDia}. In the same way,  when $\gamma=0,$ $b^{S1}_{l}=0$ and $b^{S2}_{l}$ coincides with the sub-grid independent term of the stationary advection-diffusion problem \cite{ChaconDia}, which, in the case of constant velocity, is also zero. 
 
 Now we show two numerical tests, one in an advection dominated regime and another in a reaction dominated regime. First, we have considered the same values as in \cite{ChaconDia} $h=1/40, c=400, \mu=1,$ and added the reaction term $\gamma=1,$ see Fig. \ref{RCD1}(a). We observe similar behavior as in the convection-diffusion problem, as the dominant term here is the velocity  $c.$ Regarding the stabilized solution, we also observe that the solutions obtained with even number of eigenpairs present wiggles, and those with odd number do not. 
Second, we have considered a reaction dominant case. We have taken $h=1/40, c=1, \mu=1,$ and $\gamma=1000,$ see Fig. \ref{RCD1}(b). We observe that in this case, the Galerkin solution (in red) is detached from the exact one (in blue), see Fig. \ref{RCD2}. Regarding the stabilized solution, that approximates the exact solution in the grid-nodes when the number of eigenpairs tends to infinity, we observe a better performance in the case of an odd number of eigenpairs.

\begin{figure}[!ht]
\begin{center}
\begin{tabular}{ll}(a)&(b)\\
\includegraphics[width=0.5\linewidth]{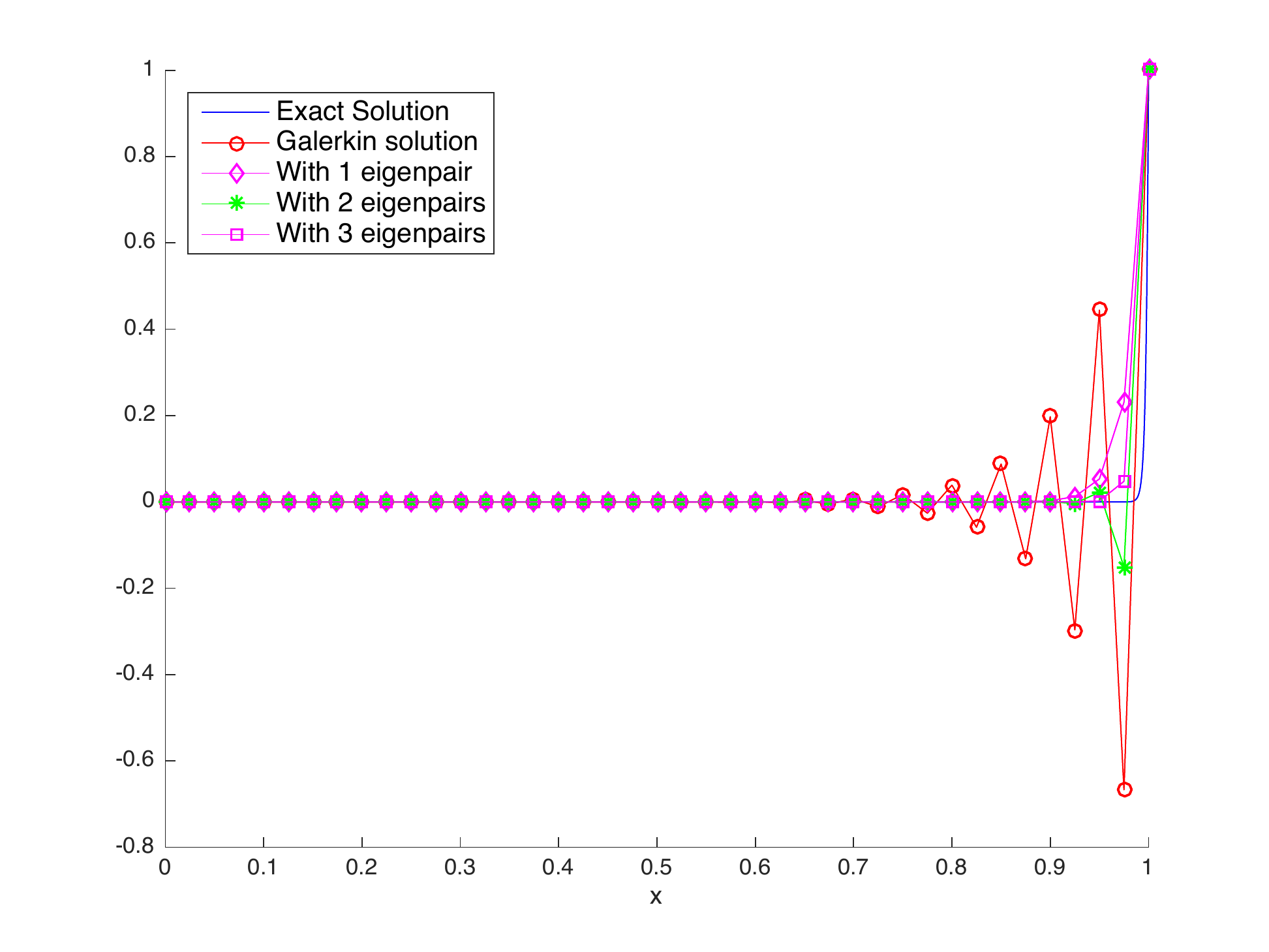}&
\includegraphics[width=0.5\linewidth]{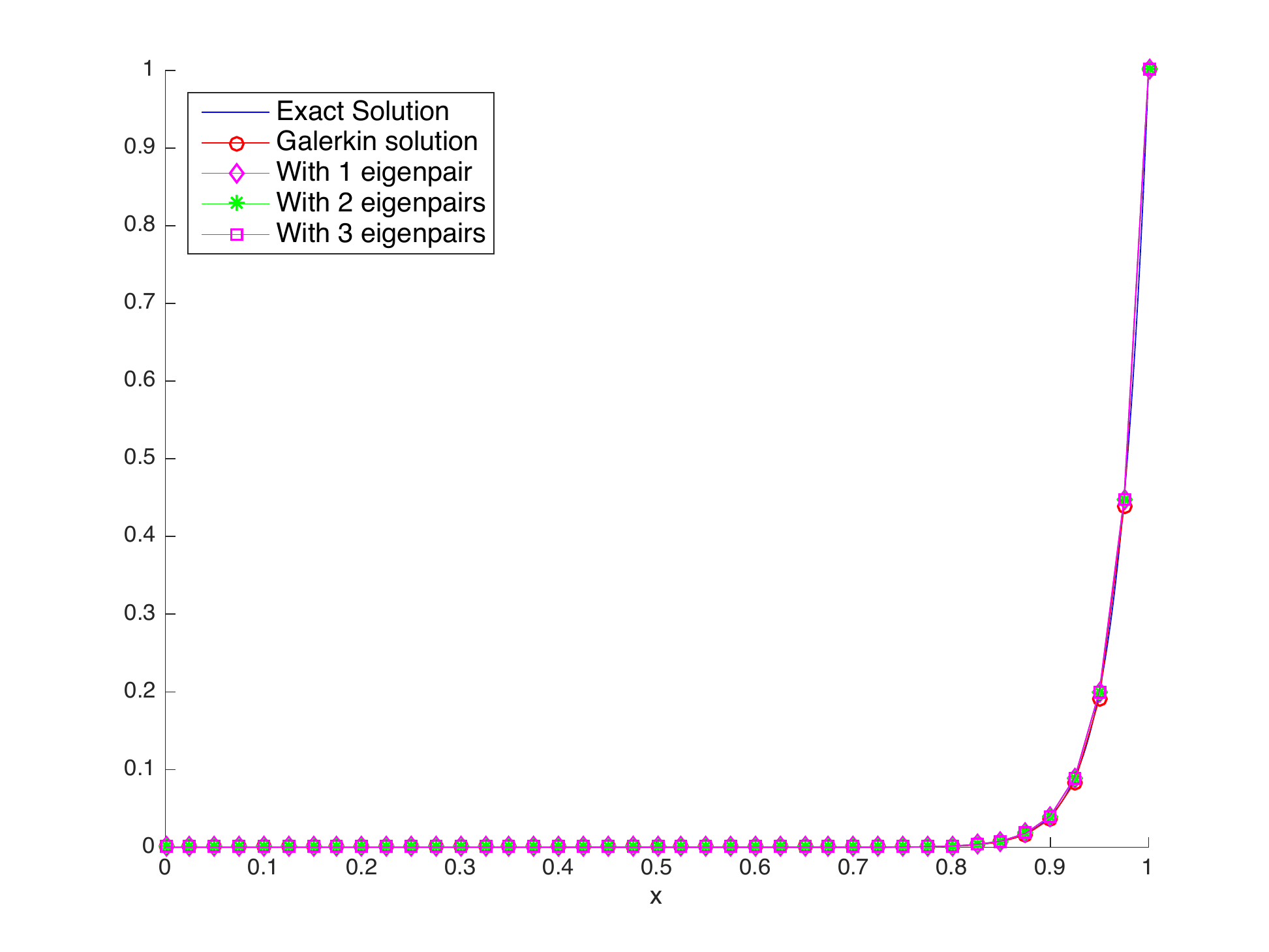}
\end{tabular}
\end{center}
\caption{\label{RCD1} Exact, Galerkin and stabilized solutions of the 1D advection-diffusion-reaction problem with (a) $\gamma=\mu=1$ and $c=400$ and (b) $\gamma=1000$ and $\mu=c=1$.}
\end{figure}

%

\begin{figure}[!ht]
\begin{center}
\begin{tabular}{ll}(a)&(b)\\
\includegraphics[width=0.5\linewidth]{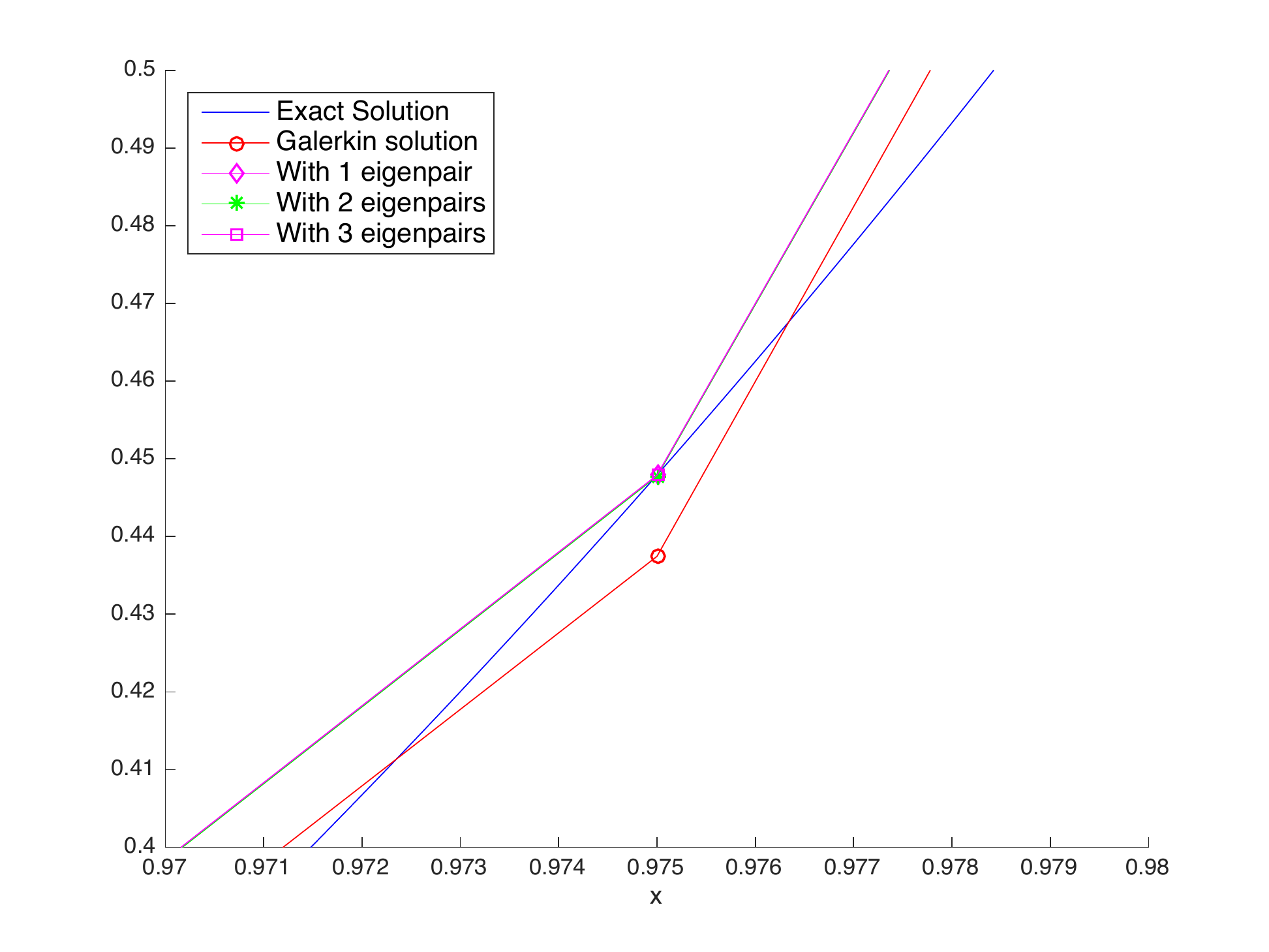}&
\includegraphics[width=0.5\linewidth]{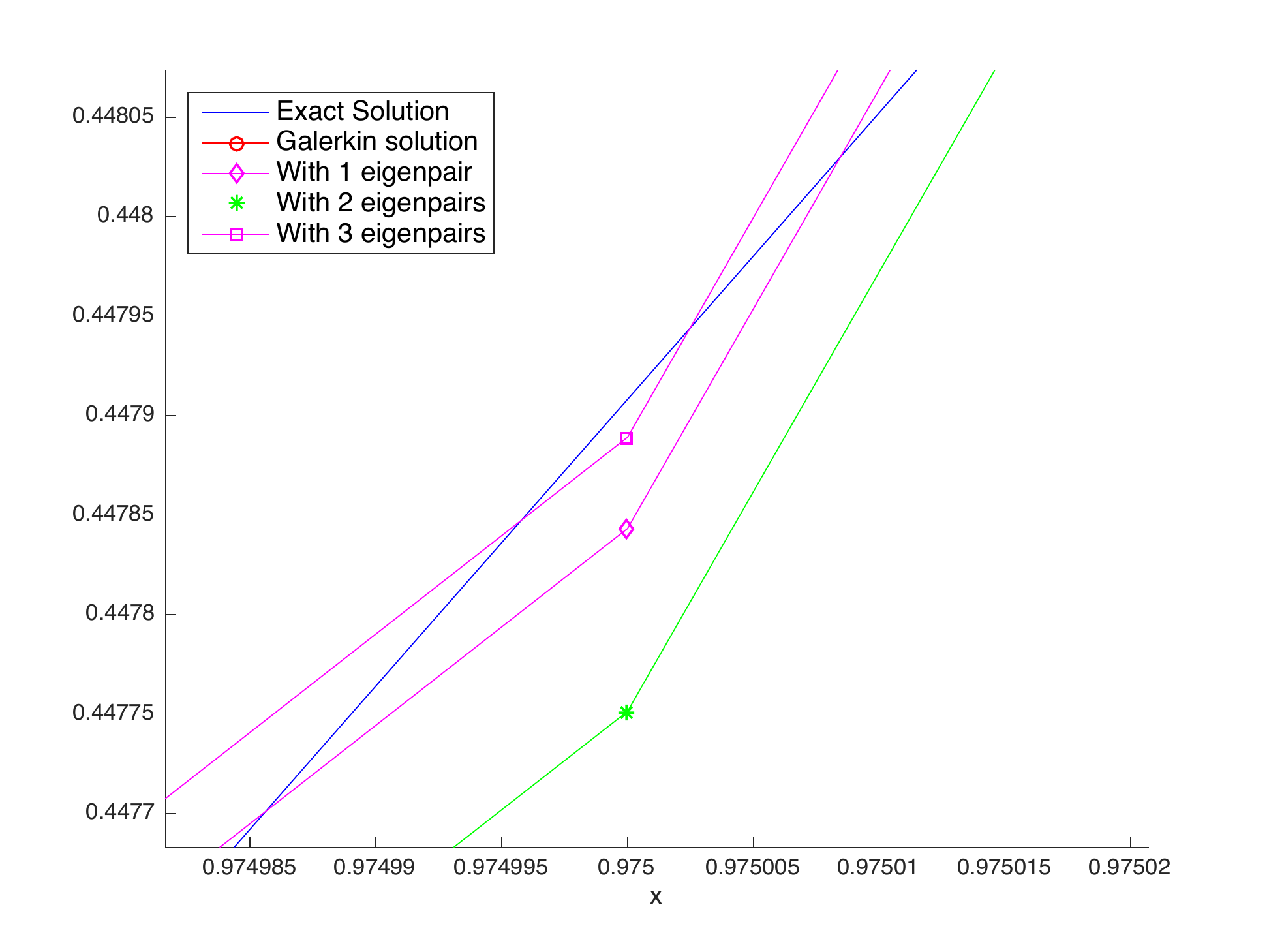}
\end{tabular}
\end{center}
\caption{\label{RCD2} Exact, Galerkin and stabilized solutions of the 1D advection-diffusion-reaction problem with $\gamma=1000$ and $\mu=c=1$ (zoom of Fig. \ref{RCD2} (b)).}
\end{figure}
Regarding the $h$-convergence order (computed when $\mu=c=\gamma=1$), we have studied the errors between the discrete solution and the piecewise interpolates of the exact solution in spaces $X_h$ and $X_{h/10}$ (a much finer mesh). We obtain order 2 in $L^2(0,1)$ and order 1 in $H^1(0,1)$ working with the fine mesh, and order 
2 in  $L^2(0,1)$ and $H^1(0,1)$ working with the coarse mesh. Thus, the well-known property that the discrete solution of the VMS formulation of steady advection-diffusion equation is exact at grid nodes is in some sense inherited by the VMS-spectral discretization of the advection-diffusion-reaction equation, but relaxed to a second order approximation, which anyhow is beyond the theoretical first order that should be inherited from interpolation.

The error behavior with respect to $M$ number of eigenpairs is illustrated in Fig. \ref{convM} for the two numerical tests previously shown in $L^\infty(0,1)$. We obtain order 3 in both cases.

\begin{figure}[!ht]
\begin{center}
\begin{tabular}{ll}(a)&(b)\\
\includegraphics[width=0.5\linewidth]{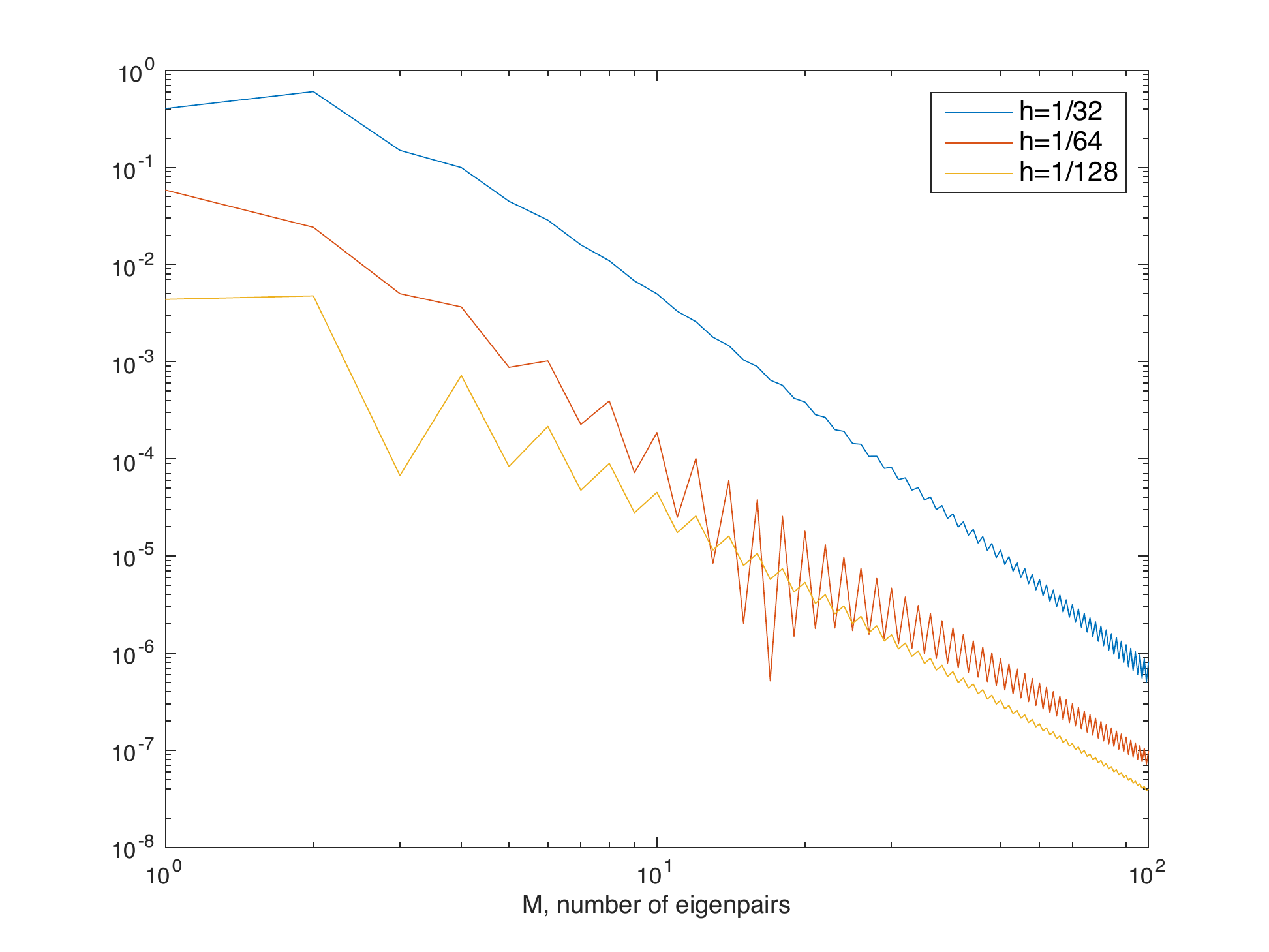}&
\includegraphics[width=0.5\linewidth]{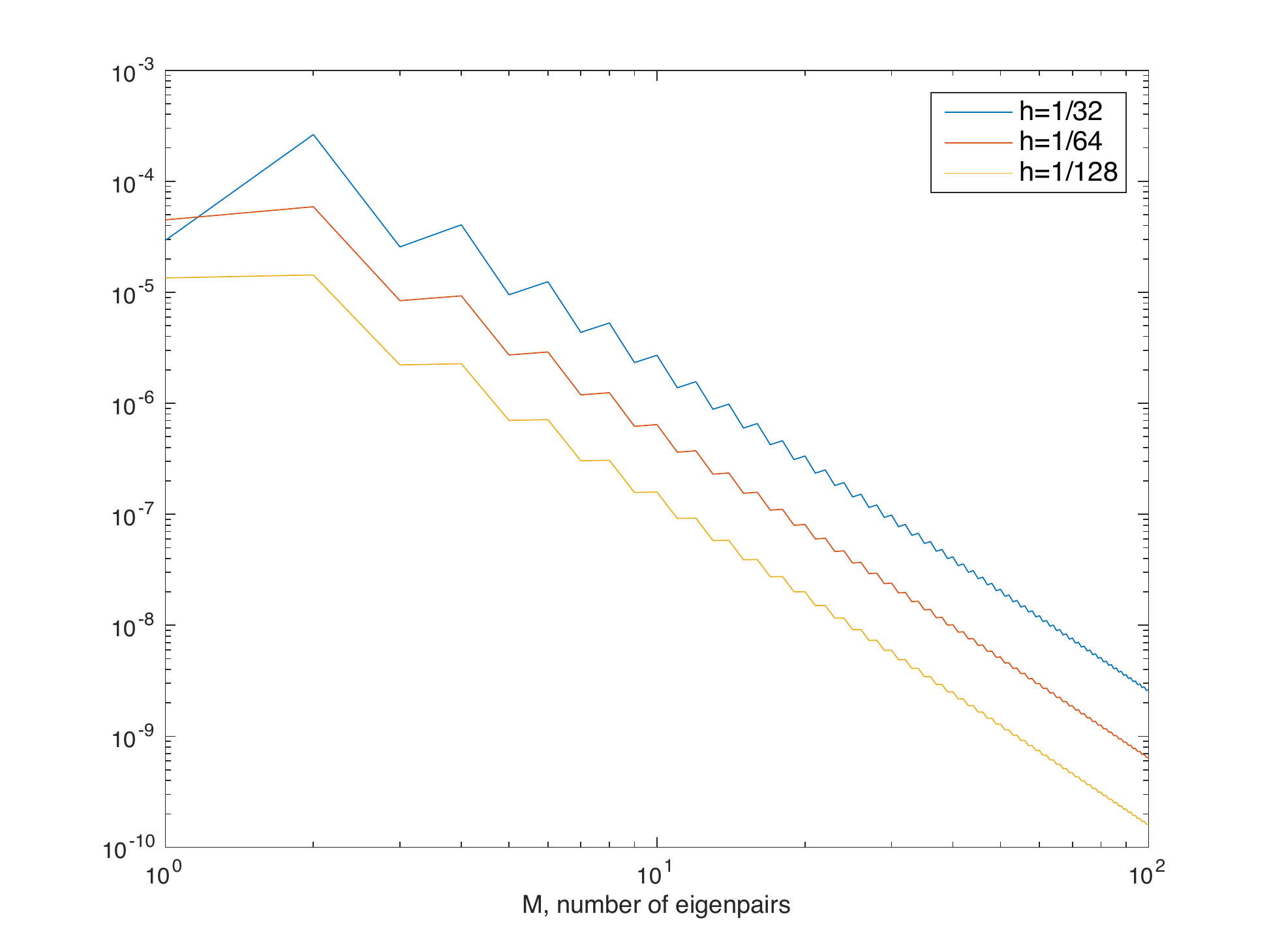}
\end{tabular}
\end{center}
\caption{\label{convM} Error in $L^\infty(0,1)$ for different values of fixed $h$ with respect to $M$ number of eigenpairs, for the 1D advection-diffusion-reaction problem with  (a) $\gamma=\mu=1$ and $c=400$ and (b) $\gamma=1000$ and $\mu=c=1$.}
\end{figure}

%

\subsection{Evolutive advection-diffusion equation}
In this section, we consider problem (\ref{EAD}) in the 1D case with constant velocity $c.$
Consider a uniform partition $\{0=t_0<t_1<...<t_N=1\}$ of the interval $[0,1],$ with time-step size $k=1/N,$ uniformly spaced nodes $0=x_1<...<x_{r+1}=1$ and $\{\varphi_{K=1}^{r+1}\}$ the piecewise affine basis functions associated to these nodes.
We look for $\displaystyle {U}_h^{n+1}=\sum_{m=2}^r U^m\varphi_m,$ with $U^m\in\mathbb{R}$ solution of the VMS discretization (\ref{ecuVMS_M}).
Thus, $\mathbf{U}^{n+1}=(U^1,...U^r)^T$  is the solution of 
\begin{equation}\label{ecuev}
\begin{array}{l}
M^{n+1}\mathbf{U}^{n+1}=\mathbf{d}^{n+1},\quad n=0,1,...,N-1,\\
\mathbf{U}^{0}=\mathbf{U}(0),
\end{array}
\end{equation}

where $M^{n+1}\in\mathbb{R}^{(r-1)\times(r-1)}$ and $\mathbf{d}^{n+1}\in\mathbb{R}^{n-1}$ are defined as
$$M^{n+1}=M^E+k (M^C+\mu M^D)+M^{S,n},\quad \mathbf{d}^{n+1}=\mathbf{d}^E+\mathbf{d}^S,$$
where $M^R,M^C,M^D$ and $M^{S,n}$ are respectively, the evolution, convection, diffusion and sub-grid matrices, $\mathbf{d}^E$ is the independent term and $\mathbf{d}^S$ is the stabilized independent term, defined by
\begin{equation*}
\begin{array}{ll}
M^E_{lm}=(\varphi_l,\varphi_m),&M^C_{lm}=(\varphi_l',\varphi_m),\quad\mbox{for}\quad l,m=1,...,n,\\ \noalign{\smallskip}
M^D_{lm}=(\varphi_l',\varphi_m'),&M^{S,n}=B^{S1,n}-c_nk\,B^{S2,n}+c_nk\,B^{S3,n}+c_n^2k^2\,B^{S4,n},\quad\mbox{for} \quad l,m=1,...,n,\\ \noalign{\smallskip}
d^E_{l}=(\mathbf{U}^{n},\varphi_l),&d_l^S=d_l^{S1}+d_l^{S2},\quad\mbox{for} \quad l,m=1,...,n,\\ \noalign{\smallskip}
\end{array}
\end{equation*}
where
\begin{equation*}
\begin{array}{l}
d^{S1}_{l}=\displaystyle - \sum_{j=1}^M \sum_{K\in\mathcal{T}_h}\beta_j^{K,t}(f^{n+1},p_{K} \hat{z}_j^{(K)})(\varphi_l,\hat{z}_j^{(K)}),\,
d^{S2}_{l}=\displaystyle c k \sum_{j=1}^M \sum_{K\in\mathcal{T}_h}\beta_j^{K,t}(f^{n+1},p_{K} \hat{z}_j^{(K)})(\varphi_l',\hat{z}_j^{(K)}),
\end{array}
\end{equation*}
where $B^{Si,n},$ $i=1,...,4$ are defined similarly to the $B^{Si}$ defined of (\ref{expB}) with obvious changes in notation. 

Now, we are going to perform some numerical tests in order to prove the reliability of the method.
First, we consider problem (\ref{EAD}) in the 1D case with constant velocity $c=1000,$ diffusion coefficient $\mu=1,$ without source term and with the initial condition

\begin{equation}\label{U01}
U_0=
\left\{\begin{array}{l}
1 \quad \mbox{if}\quad |x-0.45|\leq 0.25,\\
0 \quad \mbox{otherwise}.\\
\end{array}\right.
\end{equation}

We consider a spatial mesh with $h=1/50$ and time-step $k=10^{-3}.$ The Galerkin solution in the first five time-steps is represented in Fig.~\ref{Ev1} left panel and the spectral solution in the first five time-steps is represented in Fig.~\ref{Ev1} right panel with $M=14$ eigenpairs (in green) and $M=15$ eigenpairs (in magenta). As it can be observed, while Galerkin solution presents spurious oscillations, the spectral solution with $M=15$ eigenpairs does not, and the spectral solution with $M=14$ presents small peaks. In general, as in the stationary advection-diffusion problem \cite{ChaconDia}, we also observe that solutions with an even number of eigenpairs can present wiggles, and those with an odd number do not. 

Note that in general the matrix of the method formulated as \eqref{ecuev} is expensive to compute. A less costly formulation comes from the expression \eqref{APVMS}, that requires to pre-compute the approximate stabilized coefficients $\tau_K^M$ by \eqref{definf}. In fact, this formulation has a computational cost quite close to the standard VMS formulation, given by \eqref{EXVMS}. We here prefer use the exact formulation \eqref{ecuev} to avoid in a first step the computation of the coefficients $\tau_K^M$.

\begin{figure}[!ht]
\begin{center}
\includegraphics[width=0.7\linewidth]{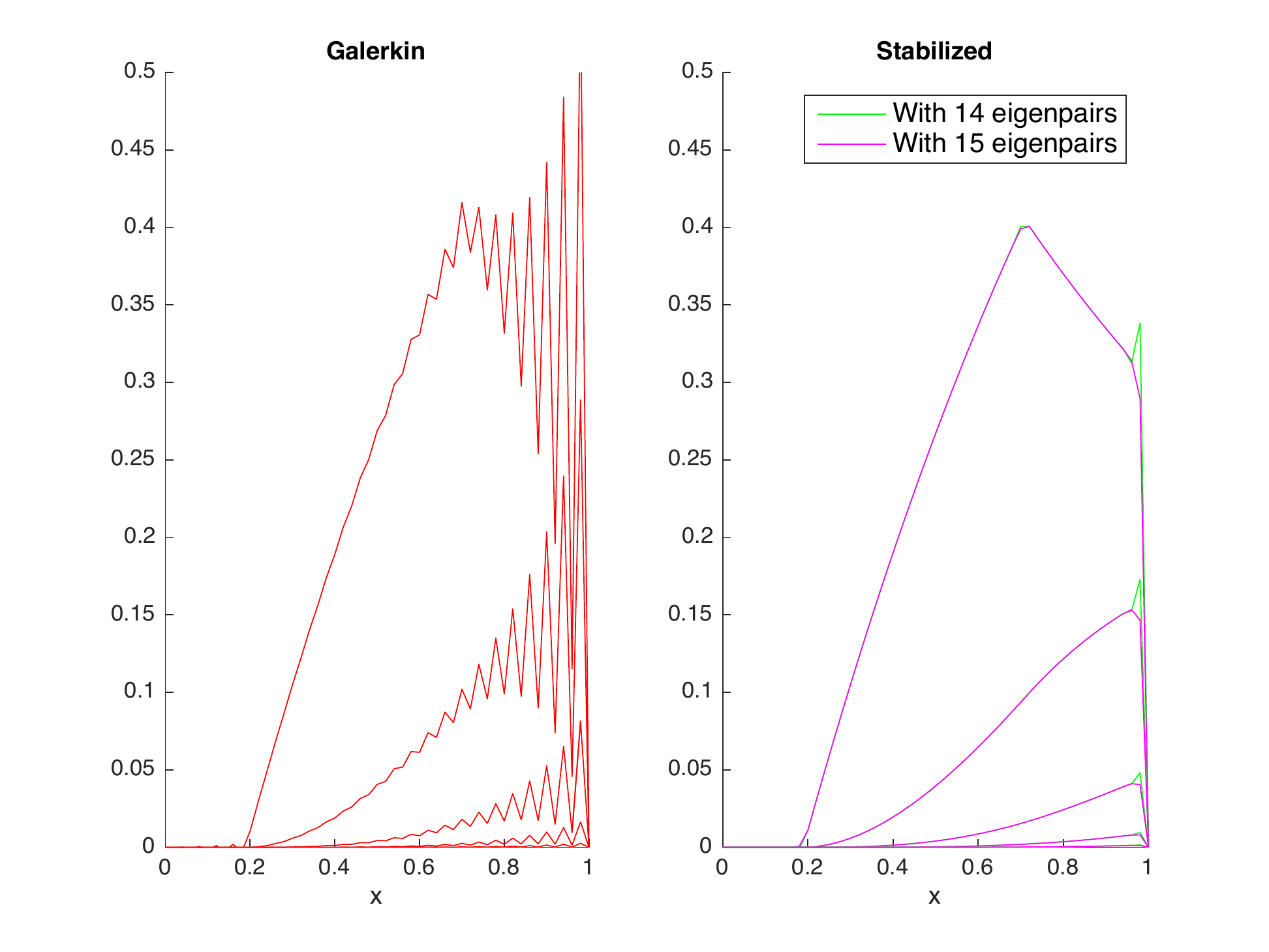}
\end{center}
\caption{\label{Ev1} 
Solution of problem  (\ref{EAD}) when $c=1000,\mu=1,f=0$ and $U_0$ given in (\ref{U01}) in the first five time-steps with $k=10^{-3}.$
Galerkin solution (left panel) and spectral solution (right panel) with $M=14$ eigenpairs (in green) and $M=15$ eigenpairs (in magenta).}
\end{figure}

\subsubsection{Coincidence at grid nodes with the exact solution in the first time-step}
In this subsection, we illustrate the fact that the spectral method approximation of the solution tends to the exact solution at grid nodes when $M\rightarrow \infty$ at the first time-step.  
In particular, we consider problem (\ref{EAD}) in the 1D case with constant velocity $c=400,$ diffusion coefficient $\mu=1,$ without source term and with the initial condition given in (\ref{U01}).

We consider a spatial mesh with $h=1/50$ and time-step $k=10^{-5}.$ The solution in the first time-step is represented in Fig.~\ref{Ev1step} (a) and a zoom around $x=0.7$ in (b). The line in blue is the best approximation of the solution, which is computed  by Galerkin method with a refined mesh $h=1/500,$ the red line corresponds with the Galerkin solution with $h=1/50$ and the green
one is the spectral solution with $M=5$ eigenpairs. As it can be seen, while Galerkin solution with the coarse mesh presents peaks, the spectral solution does not, and already with 5 eigenpairs approaches quite well the exact solution at grid nodes. However, this property is not kept in the next steps of the integration. The reason is because this fact only happens if the initial condition in the current integration step coincides with the initial condition of the exact solution, which only happens in the first step.  
\begin{figure}[!ht]
\begin{center}
\begin{tabular}{ll}(a)&(b)\\
\includegraphics[width=0.5\linewidth]{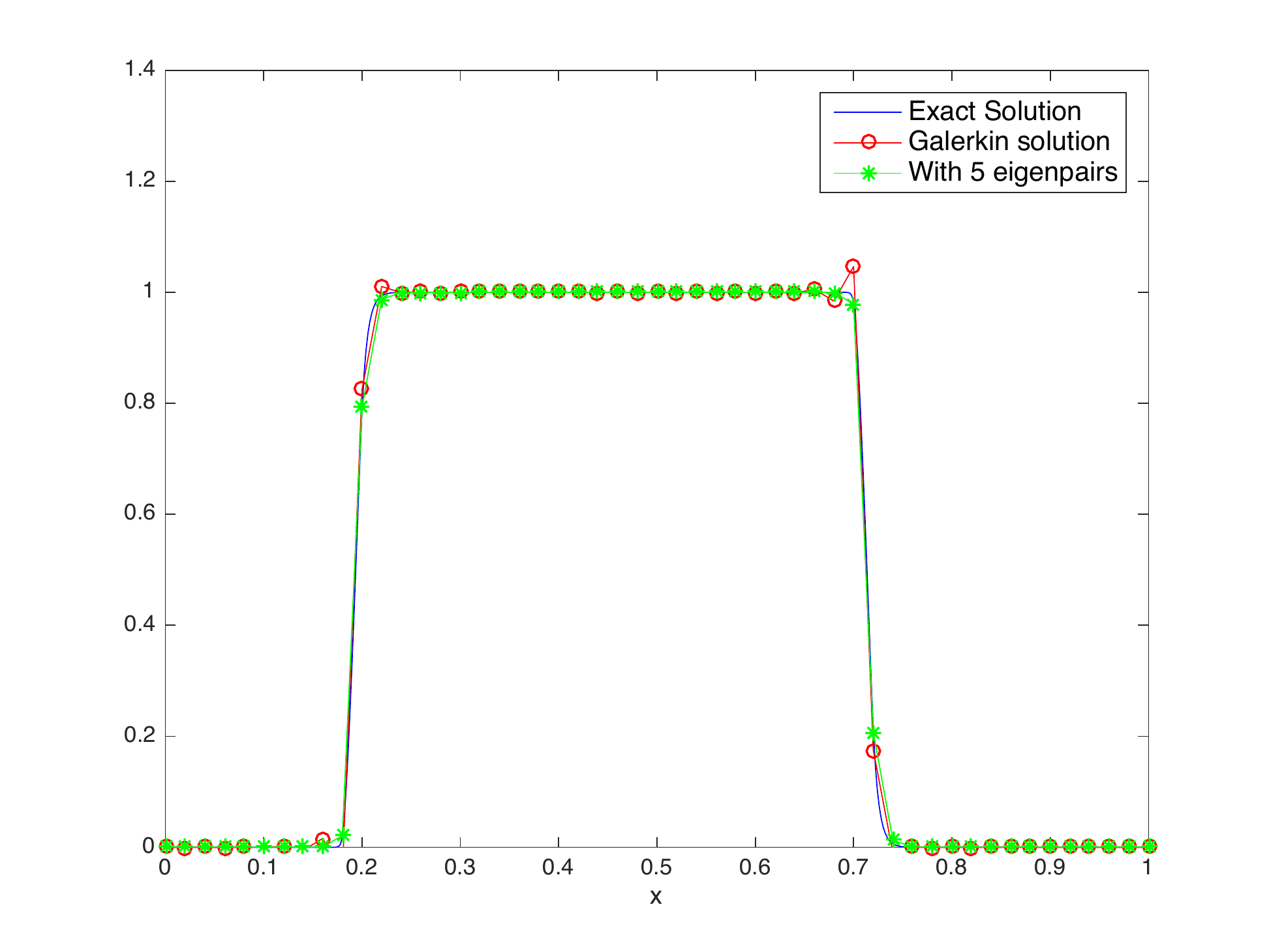}&
\includegraphics[width=0.5\linewidth]{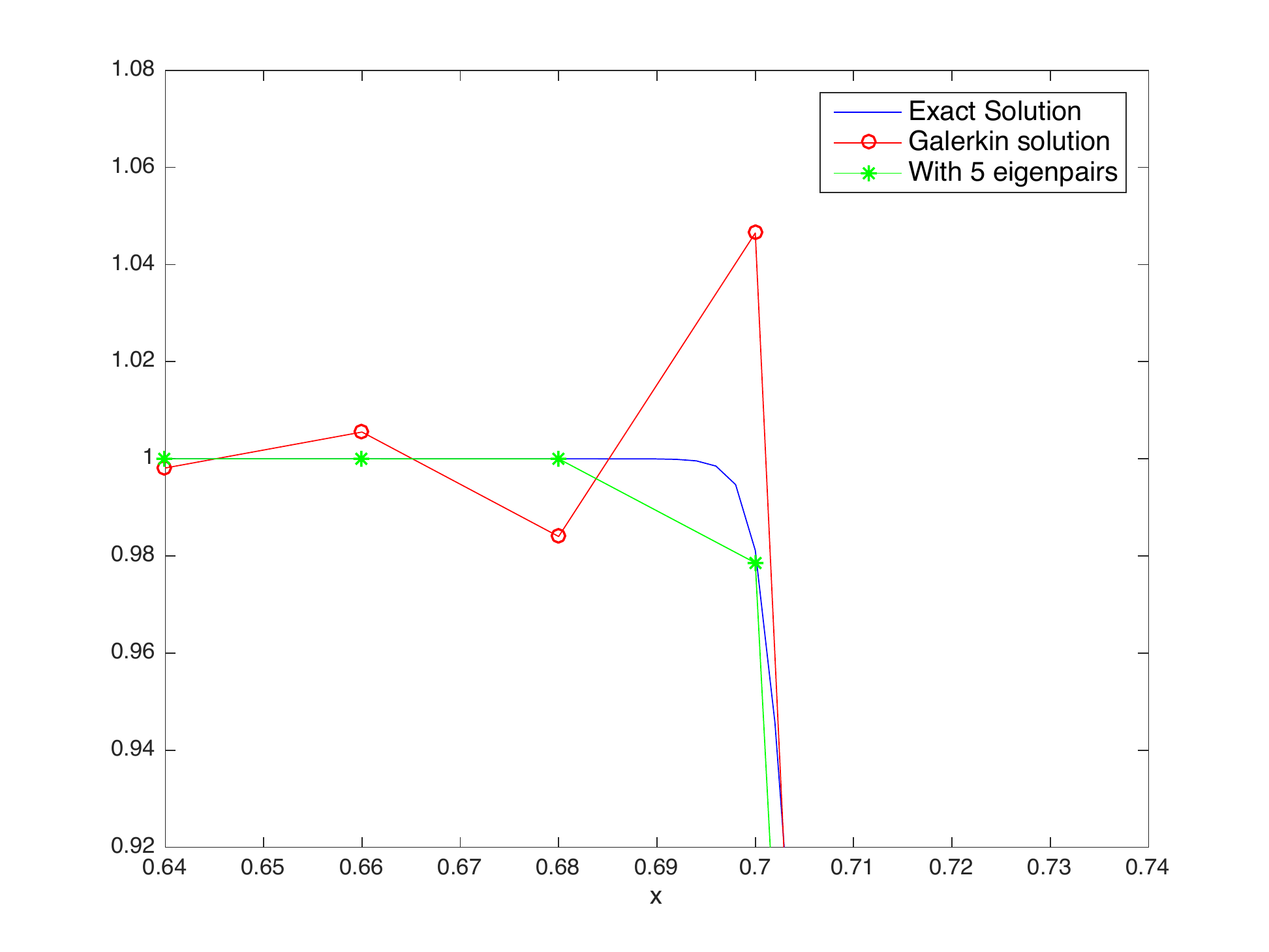}
\end{tabular}
\end{center}
\caption{\label{Ev1step}  
Solution of problem  (\ref{EAD}) when $c=400,\mu=1,f=0$ and $U_0$ given in (\ref{U01}) in the first time-step $t=10^{-5}.$ The line in blue is the best approximation of the solution, which is computed  by Galerkin method with a refined mesh $h=1/500,$ the red line corresponds with the Galerkin solution with $h=1/50$ and the green one is the spectral solution with $M=5$ eigenpairs.}
\end{figure}

\subsubsection{Convergence orders}

Regarding the $h$-convergence order (computed when $\mu=c=1$), we obtain order 2 in \break $L^\infty((0,T);L^2(0,1))$ and order 1 in $L^2((0,T);H^1(0,1))$ working with the fine mesh and order 
2 in $L^\infty((0,T);L^2(0,1))$ and $L^2((0,T);H^1(0,1))$ working with the coarse mesh, see Fig.~\ref{convMEv}. Thus, also in the evolution advection-diffusion-convection problem takes the super-convergence effect at the grid nodes.

\begin{figure}[!ht]
\begin{center}
\begin{tabular}{ll}(a)&(b)\\
\includegraphics[width=0.5\linewidth]{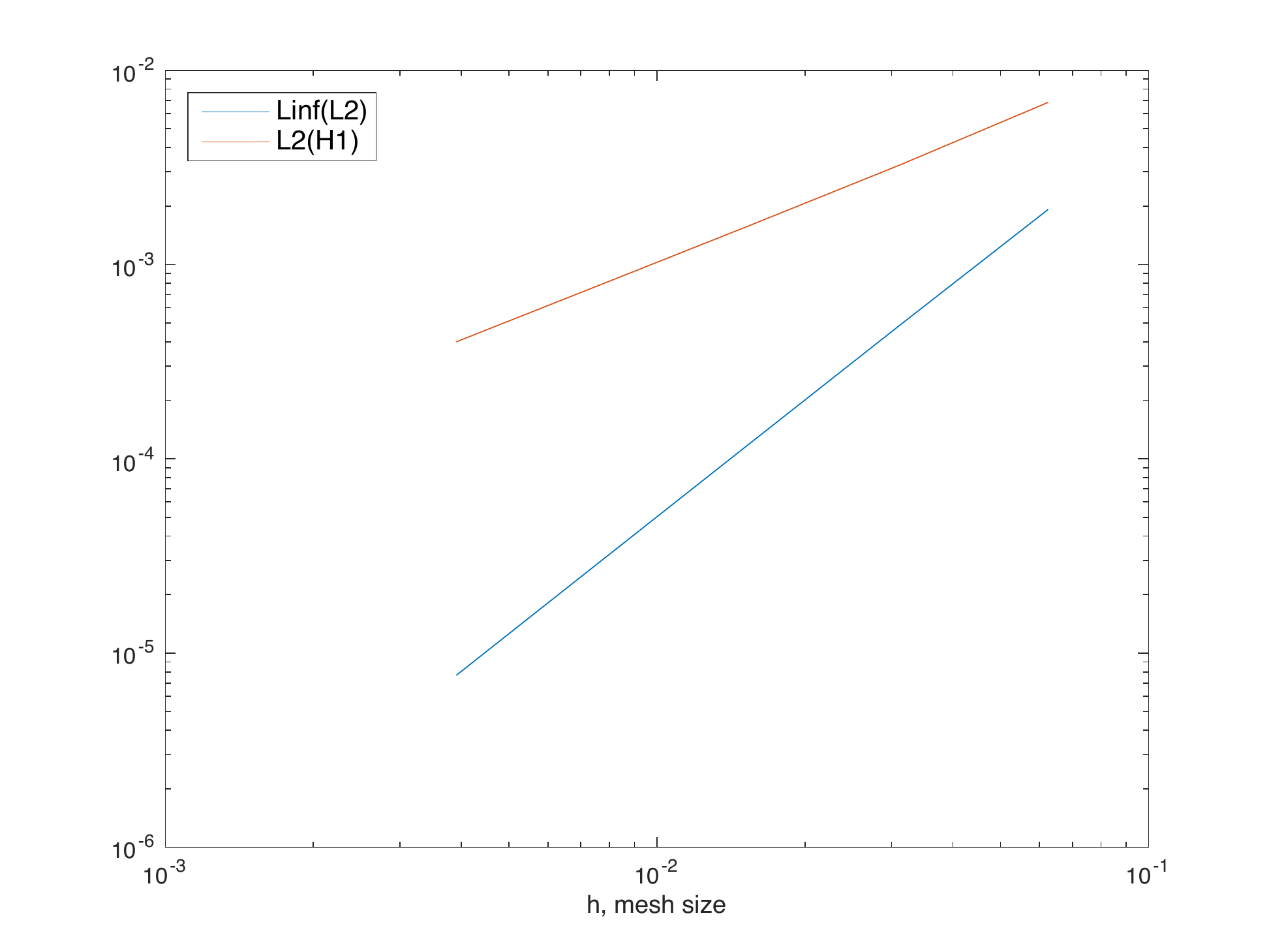}&
\includegraphics[width=0.5\linewidth]{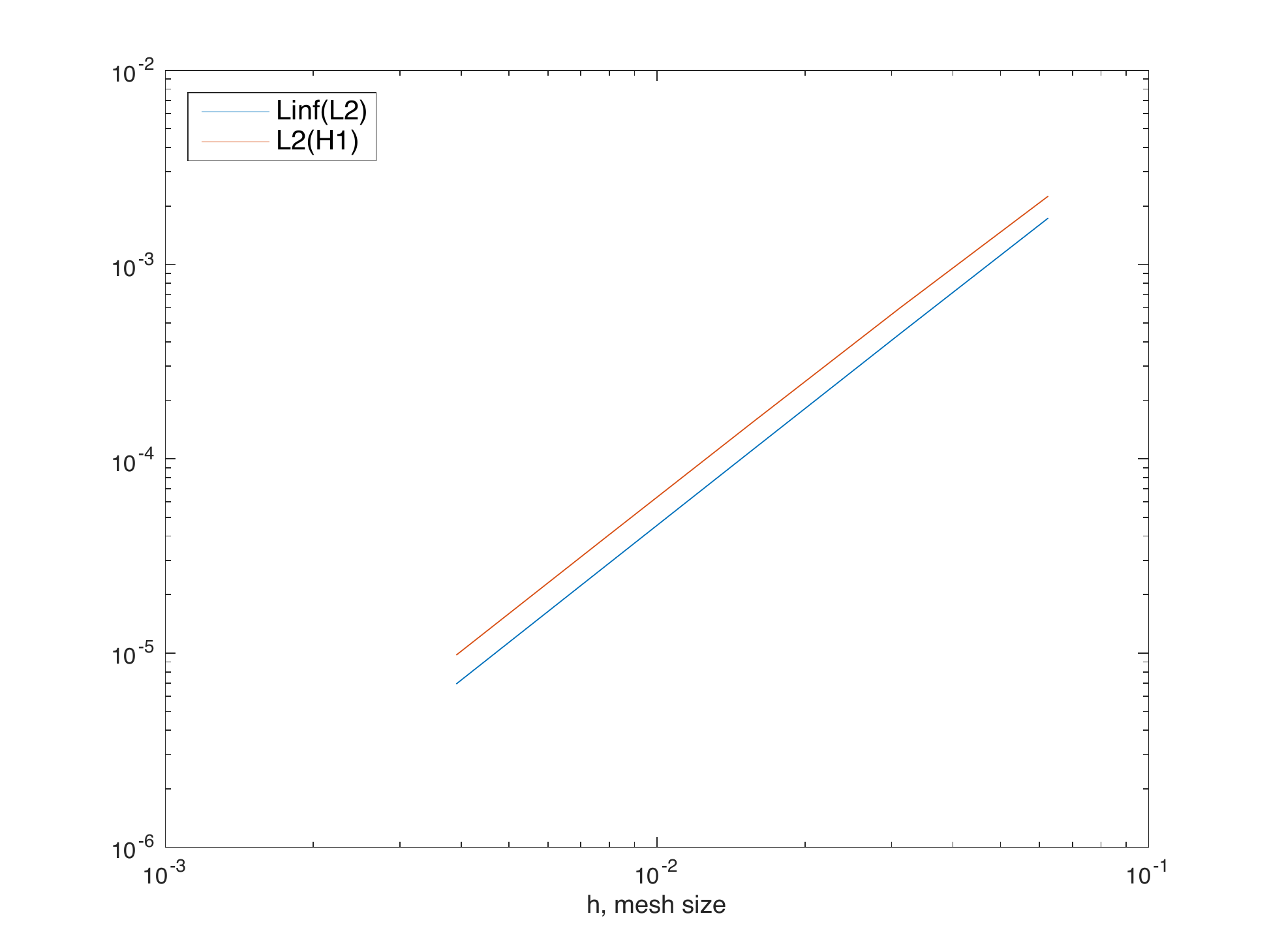}
\end{tabular}
\end{center}
\caption{\label{convMEv} Error in norm   $L^\infty((0,T);L^2(0,1))$ and $L^2((0,T);H^1(0,1))$ for fixed $M=10$ eigenpairs with respect to mesh size $h$ for the 1D  evolutive advection-diffusion problem with $c=\mu=1$ computed with (a) fine mesh and (b) coarse mesh.}
\end{figure}

Regarding the $k$-convergence order (computed when $\mu=c=1$), we obtain order 1 in $L^\infty((0,T);L^2(0,1))$ and $L^2((0,T);H^1(0,1)).$ 

The error behavior with respect to $M$ number of eigenpairs is illustrated in Fig. \ref{convM} in $L^\infty((0,T);L^2(0,1))$ and $L^2((0,T);H^1(0,1)).$ We obtain order 4 in both norms, thus a reduced number of modes is needed to obtain accurate approximations of smooth solutions.

\begin{figure}[!ht]
\begin{center}
\begin{tabular}{ll}(a)&(b)\\
\includegraphics[width=0.5\linewidth]{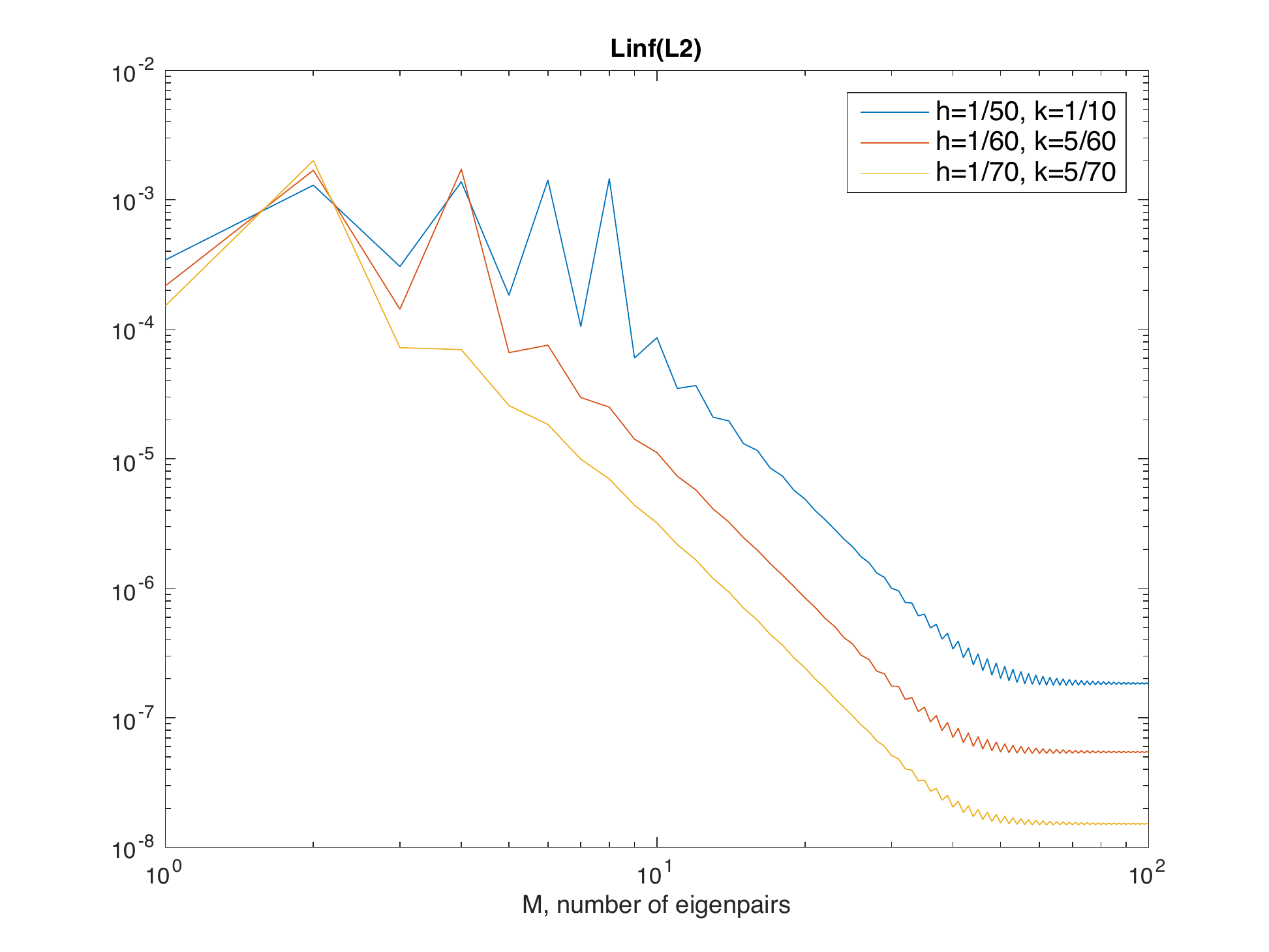}&
\includegraphics[width=0.5\linewidth]{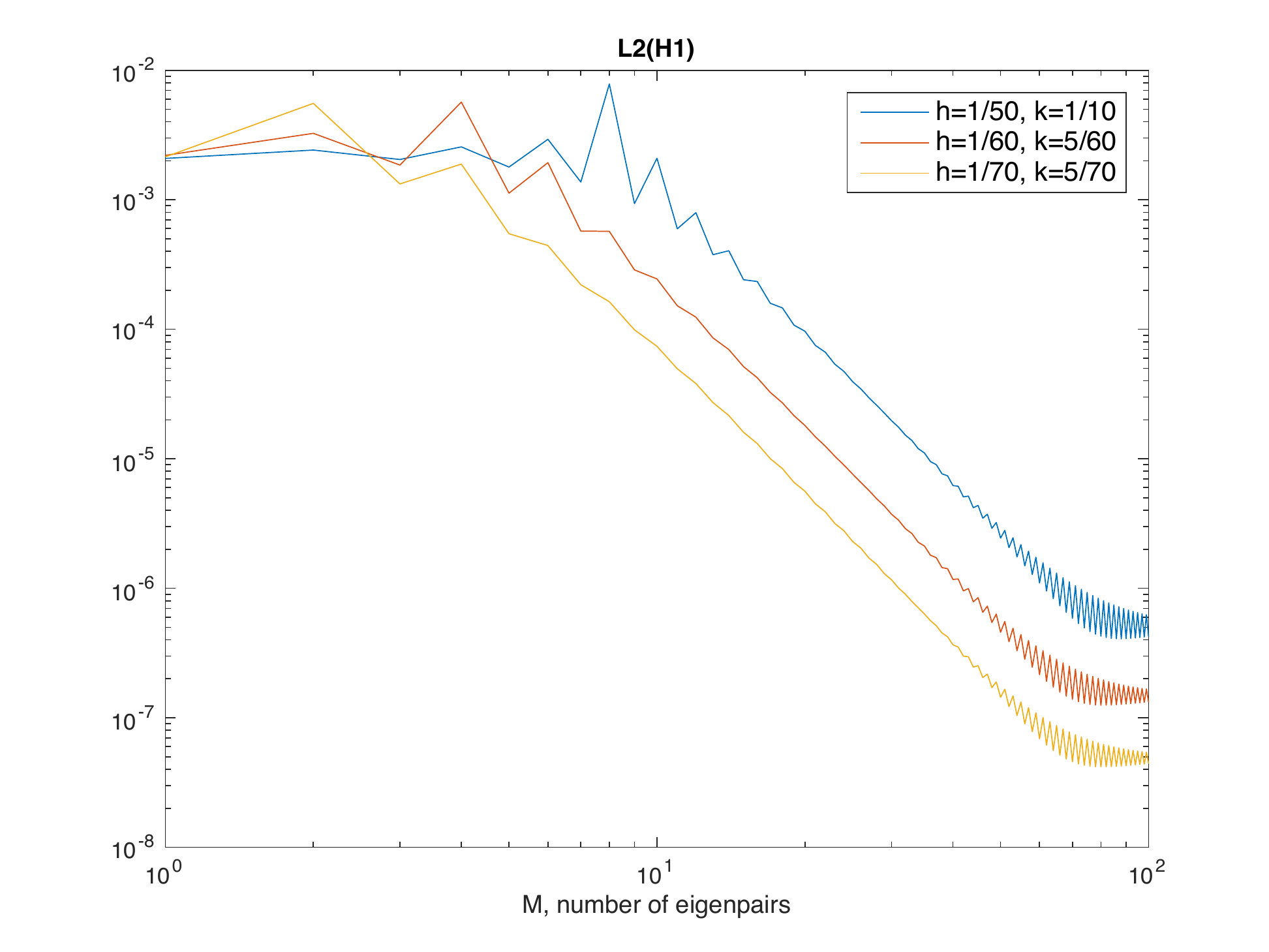}
\end{tabular}
\end{center}
\caption{\label{convM}  Error in norm  (a) $L^\infty((0,T);L^2(0,1))$ and (b) $L^2((0,T);H^1(0,1))$ for fixed $k/h=5$ with respect to $M$  eigenpairs for the 1D  evolutive advection-diffusion problem with $c=1000,$ $\mu=1$ computed with coarse mesh.}
\end{figure}



\subsubsection{Parabolic problems at small time-steps}
We consider in this subsection problems where spurious oscillations appear in the Galerkin solution due to extra small time-steps, as it was reported, for instance, in \cite{hauke}. In particular, these spurious oscillations can happen when $CFL<CFL_{bound}=P_h/(3(1-P_h)),$ (see  \cite{hauke}) being $P_h$ the element P\'eclet number $P_h=h|c|/(2\mu).$
Here, we consider the same problem as in the previous subsection but with $c=20,$ $h=1/100$ and the time-step is chosen such that $CFL/CFL_{bound}=1/2.$ We obtain the results shown in Fig.~\ref{Evhauke}, where we have represented the first five time-steps. As one can see in the figure, Galerkin solution (left panel) possesses spurious oscillations that the spectral solution  with $M=11$ eigenpairs (right panel) does not. There exists also in this case an extra diffusion effect in the spectral solution. To avoid this, it is necessary to consider more accurate methods for the semi-discretization in time.

\begin{figure}[!ht]
\begin{center}
\includegraphics[width=0.7\linewidth]{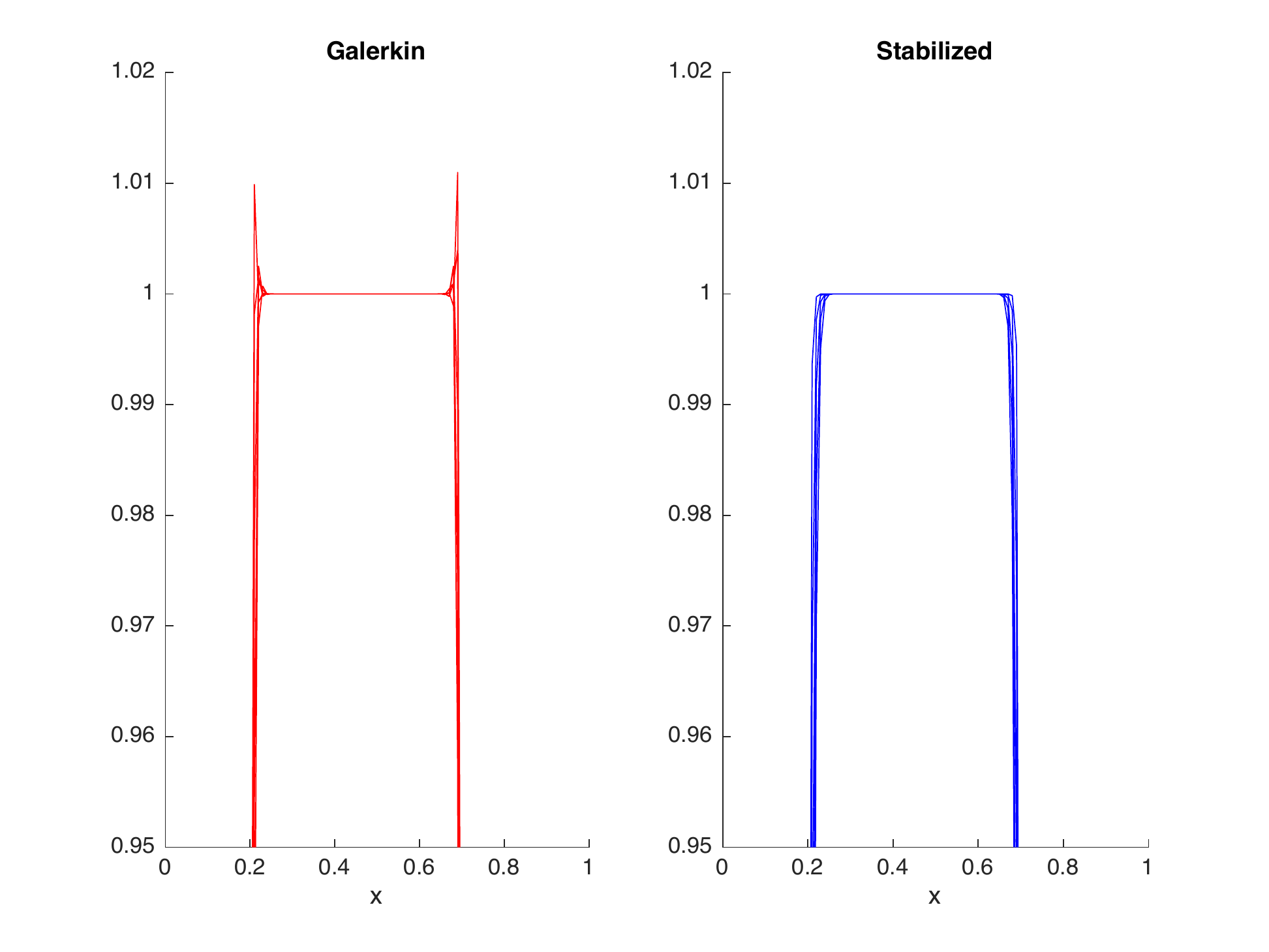}
\end{center}
\caption{\label{Evhauke} 
Solution of problem  (\ref{EAD}) when $c=20,\mu=1,f=0$ and $U_0$ given in (\ref{U01}) in the first five time-steps with $k$ such that $CFL/CFL_{bound}=1/2.$ 
Galerkin solution (left panel) and spectral solution  with $M=11$ eigenpairs (right panel).}
\end{figure}

\section{Conclusions and perspectives} \label{conclusions}
In this paper we have extended to parabolic problems the VMS-spectral method developed in \cite{ChaconDia} for elliptic problems. We have applied the method to the evolutive advection-diffusion problem. To perform the error estimates, we have distinguished between the diffusion-dominated regime and convection-dominated regime. In the second case, we have used the relation between the stabilized term expressed in terms of Green's functions and in terms of spectral functions. 

We have cast  the method as a standard VMS method with stabilized coefficients replaced by some approximated stabilized coefficients. These are computed from either the spectral eigenfunctions or from approximated element Green's functions, that in their turn are exactly computed from these eigenfunctions. Thus the computational cost of the spectral VMS method is quite close to that of VMS method, once the stabilized coefficients have been computed in an off-line step.

In looking for the solution of the evolutive advection-diffusion problem, we have naturally found the solution of the stationary advection-diffusion-reaction equation, which is also included in this work. We have performed numerical tests of both the stationary advection-diffusion-reaction equation and the evolutive advection-diffusion problem, in the 1D case. In the evolutive case, we observe that at the first time-step, the spectral method approximation of the solution tends to the exact solution at grid nodes when the number of eigenpairs tend to infinity. A natural next problem to consider, trying to maintain this property all along the integration, would be to work directly with sub-scales in time. Another path to follow, would be to consider problems in dimensions larger than one.

\appendix
\section{Appendix}\label{secapp}
In this Section, first we estimate the convergence order of $|\hat\tau_{n,K}-\hat\tau^M_{n,K}|$, and second we compute the explicit expression of $\hat\tau_{n,K}$ in the 1D case.

{\bf Estimates for $|\hat\tau-\hat\tau^M|$.} 

To simplify the notation, let us denote $\hat\tau=\hat\tau_{n,K},$ $\hat\tau^{M}=\hat\tau_{n,K}^M,$ $g_y=g_y^{(n,K)}$ and $g_y^M=g_y^{(n,K,M)}$.  Consider the solution $v \in H^1_0(K)$ of the problem
$$
\left\{\begin{array}{l}
-\Delta v = 1 \mbox{  in  } K, \\
v =0 \mbox{  on  } \partial K.
\end{array}\right.
$$
As $K$ is convex, the operator $-\Delta$ is an isomorphism from $H^{2}(K)\cap H^1_0(K)$ onto $L^{2}(K)$, and then $v \in H^2(K)$ (see Dauge \cite{dauge}). From definitions (\ref{tauk}) and (\ref{taukM}) it holds, 
\begin{equation}\label{estimtau1}
\begin{array}{rcl}
|K||\hat\tau-\hat\tau^M|&=&\displaystyle \int_{K\times K}(g_y(x)-g_y^{M}(x))dxdy=\int_{K\times K}(g_y(x)-g_y^{M}(x)) (-\Delta v)(x)\, dxdy \\ \noalign{\smallskip}
&=&\displaystyle  \int_K \frac{\langle \delta_y-\delta_y^{M}, v \rangle}{\|\Delta v\|_{L^2(K)}} \, dy.
\end{array}
\end{equation}
\par
It holds $\displaystyle v=\sum_{j\ge 1}v_j  \hat{z}_j^{(n,K)}$, with $v_j= (v,\hat{z}_j^{(n,K)})_{L^2_{p_{n,K}}(K)}$, where the series converges in ${L^2_{p_{n,K}}(K)}$. Thus,
$$
\langle \delta_y-\delta_y^M,v \rangle = \sum_{j\ge  M+1}v_j  \hat{z}_j^{(n,K)}(y),
$$
and 
\begin{eqnarray}
\int_K \langle \delta_y-\delta_y^M,v \rangle\, dy &\le& |K|^{1/2} \|p_{n,K}^{-1}\|_\infty^{1/2}\, \left\|\sum_{j\ge  M+ 1}v_j  \hat{z}_j^{(n,K)}\right\|_{L^2_{p_{n,K}}(K)} \nonumber \\
& \le& |K|^{1/2} \|p_{n,K}^{-1}\|_\infty^{1/2}\, \left (\sum_{j\ge  M+1}v_j^2\right )^{1/2}.
\label{estdeltas}
\end{eqnarray}
Observe that from Proposition \ref{propADR} the normalized eigenfunctions $\hat{z}_j^{(n,K)}$ (in $L^2_{p_{n,K}}(K)$) of the operator ${\cal L}_{n,K}$ are related to the normalized eigenfunctions $\hat{\zeta}_j^{(K)}	$ (in $L^2(K)$) of the Laplace operator on $H^1_0(K)$ by
$$
\hat{\zeta}_j^{(K)}	 = \sqrt{p_{n,K}}\, \hat{z}_j^{(n,K)},
$$
\SF{where $p_{n,K}$ is defined in expression (\ref{pesos}).}
Then $v_j=  (\sqrt{p_{n,K}}\, v,\hat{\zeta}_j^{(K)})_{L^2(K)}$, and it follows $\displaystyle \sqrt{p_{n,K}}\,v=\sum_{j\ge 1}v_j  \hat{\zeta}_j^{(K)}$, where the series converges in $L^2(K)$.  Let $\sigma_j^{(K)}$, $j=1,2,\cdots$ the eigenvalues of the Laplace operator in $H^1_0(K)$ ordered in non-decreasing values. Then,
\begin{equation}
\|\Delta(\sqrt{p_{n,K}}\, v)\|_{L^2(K)}^2=\sum_{j \ge 1}  \left |\sigma_j^{(K)}\right |^2 v_j^2.
\end{equation}
It holds
\begin{equation} \label{eq:itholdss}
\displaystyle\sum_{j\geq M+1}v_j^2\leq \frac{1}{ \left |\sigma_{M+1}^{(K)}\right |^2} \sum_{j\geq M+1} \left |\sigma_{j}^{(K)}\right |^2 v_j^2 \leq \frac{1}{ \left |\sigma_{M+1}^{(K)}\right |^2}\,\|\Delta(\sqrt{p_{n,K}}\, v)\|_{L^2(K)}^2.
\end{equation}
As 
\SF{$p_{n,K}=e^{-\frac{1}{\mu}(\mathbf{c}_n\cdot\mathbf{x})}$}
some straightforward calculations yield
\begin{eqnarray}\| \Delta (\sqrt{p_{n,K}}\, v)\|_{L^2(K)}^2\le 3 \, \|p_{n,K}\|_\infty \, \left [  \left (\frac{|{\bf c}_{n,K}|}{8 \mu}\right )^4 \,\|v\|^2_{L^2(K)} + \left (\frac{|{\bf c}_{n,K}|}{8 \mu}\right )^2 \, \|\nabla v\|^2_{L^2(K)}+ \|\Delta v\|_{L^2(K)}^2 \right ] . \label{etimas}
\end{eqnarray}
As $-\Delta$ is an isomorphism from $H^{2}(K)\cap H^1_0(K)$ on to $L^{2}(K)$, there exists two constants $C_{i,K}$, $i=1,2$ such that
$$
\|v\|_{L^2(K) }\le C_{1,K}\, \|\Delta v\|_{L^2(K)}, \,\,
\|\nabla v\|_{L^2(K) }\le C_{2,K}\, \|\Delta v\|_{L^2(K)}.
$$
Observe that $C_{1,K}$ scales as $h_K^2$ and $C_{2,K}$ scales as $h_K$, so that
\begin{equation} \label{estimass}
\| \Delta (\sqrt{p_{n,K}}\, v)\|_{L^2(K)}^2\le C \, \|p_{n,K}\|_\infty \, \left ( Pe_{n,K}^4 +  Pe_{n,K}^2+ 1 \right )\, \|\Delta v\|_{L^2(K)}^2,
\end{equation} 
where $Pe_{n,K}= \displaystyle \frac{|{\bf c}_{n,K}| \, h_K}{\mu}$ is the element P\'eclet number and $C$ is a constant depending only on the aspect ratio of the grid elements. Note also that the eigenvalues of the Laplace operator scale as $h_K^{-2}$. Then, by estimates \eqref{estimtau1}, \eqref{estdeltas}, \eqref{eq:itholdss}-\eqref{estimass} we conclude that
\begin{equation}\label{cotatau}
  |\hat\tau-\hat\tau^M| \le C\,  h_K^{2-d/2}\, \|p_{n,K}^{-1}\|_{\infty}^{1/2}\,\|p_{n,K}\|_{\infty}^{1/2}\,  ( Pe_{n,K}^2 +1) \frac{1}{\sigma_{M+1}},
\end{equation}
where the $\sigma_i$ are the eigenvalues of the Laplace operator in the reference element. 
\par
Note that by estimate \eqref{cotatau}, for a given number of eigenfunctions $M$ there will be a range of P\'eclet numbers for which the computation will be accurate. This range will increase as $M$ increases. Note also that for 1D advection-diffusion problems, $\sigma_{M+1}$ growths like $(M+1)^2$.

{\bf Explicit expression of $\hat\tau_K$ in the 1D case}. 

Consider $b$ the solution of problem (\ref{bk}) translated to the reference element, namely,
 \begin{equation}\label{bkh}
\left\{\begin{array}{ll}
\displaystyle b+\frac{k c b^{\prime}}{h}-\frac{k \mu b^{\prime \prime}}{h^2}=1&\mbox{in }[0,1],\\ \noalign{\smallskip}
b(0)=b(1)=0. &
\end{array}\right.
\end{equation}
The solution of problem (\ref{bkh}) is given by
\begin{equation*}
b(x)=\frac{-e^{L_2
   x}\left(e^{L_1}-1\right) +e^{L_1 x+L_2}-e^{L_1
   x}+e^{L_1}-e^{L_2}}{e^{L_1}-e^{L_2}},
\end{equation*}
where 
\begin{equation}\label{L1L2}
L_1=\frac{1}{2} \left(\frac{c h}{\mu }-\frac{h \sqrt{c^2 k+4 \mu
   }}{\sqrt{k} \mu }\right)\quad\mbox{and}\quad L_2=\frac{1}{2} \left(\frac{c h}{\mu }+\frac{h \sqrt{c^2 k+4 \mu
   }}{\sqrt{k} \mu }\right).
\end{equation}
Thus, from expression (\ref{tauk}),
\begin{equation}
\hat\tau=\int_0^1 b(x) dx=\frac{\left(e^{L_1}-1\right) \left(e^{L_2}-1\right)
   L_2+L_1 \left(e^{L_1} (L_2+1)-e^{L_2}
   \left(e^{L_1}+L_2-1\right)-1\right)}{L_1
   L_2 \left(e^{L_1}-e^{L_2}\right)},
\end{equation}
where $L_1$ and $L_2$ are given in expression (\ref{L1L2}).
Bearing in mind that $h\simeq k,$ we can see that,
\begin{equation}
\hat\tau=\frac{k}{12 \mu }-\frac{k^2}{120 \mu ^2}+O(k^{5/2}).
\end{equation}

\end{document}